\documentclass{article}
\usepackage{amsmath}
\usepackage{amssymb}
\usepackage{enumerate}
\title{On approximation of functions satisfying defective renewal equations}

\author{{\sc C. Sang\"{u}esa}}

\date{}

\begin{document}
\baselineskip20pt
\begin{titlepage}
\setcounter{page}{1} \maketitle

\bigskip \bigskip
\begin{abstract}Functions satisfying a defective renewal equation arise commonly in applied probability models.  Usually these functions don't admit a explicit expression.  In this work we consider to approximate them by means of a gamma-type operator given in terms of the Laplace transform of the initial function.  We investigate  which conditions on the initial parameters of the renewal equation give optimal order of uniform convergence in the approximation.  We apply our results to ruin probability in the classical risk model, paying special attention to mixtures of gamma claim amounts.
\end{abstract}

\bigskip \bigskip
2000 Mathematics subject classification: Primary: 60K05, 60E10, 41A25; Secondary:  41A35, 91B30

\bigskip \bigskip
{\it Key words and phrases}: defective renewal equation, Laplace transform, rate of convergence, ruin probability, gamma distribution
\vfill \eject \noindent Short title: Approximations in defective renewal equations
\bigskip \noindent

\bigskip \bigskip \noindent
Mailing address:

\bigskip \noindent
C. Sang\"uesa

\smallskip \noindent
Departamento de M\'etodos Estad\'{\i}sticos,

\smallskip \noindent
Facultad de Ciencias, Universidad de Zaragoza,

\smallskip \noindent
Pedro Cerbuna, 12,

\smallskip \noindent
50009 Zaragoza (Spain).

\smallskip \noindent
csangues@unizar.es \vfill\eject

\end{titlepage}

\newtheorem{theorem}{\bf Theorem}

\newtheorem{remark}{\bf Remark}

\newtheorem{corollary}{\bf Corollary}

\newtheorem{lemma}{\bf Lemma}
\newtheorem{example}{\bf Example}
\newenvironment{proof}{\it Proof. \rm}{$\quad \Box$}
\newtheorem{proposition}{\bf Proposition}
\renewcommand{\thetheorem}{\thesection.\arabic{theorem}}
\renewcommand{\thelemma}{\thesection.\arabic{lemma}}
\renewcommand{\theproposition}{\thesection.\arabic{proposition}}
\renewcommand{\thecorollary}{\thesection.\arabic{corollary}}
\renewcommand{\theremark}{\thesection.\arabic{remark}}
\renewcommand{\theexample}{\thesection.\arabic{example}}
\newcommand{\discre}[2]{#1^{{\scriptscriptstyle
 \bullet} #2}}

\newcommand{\nst}{\mathbb{N}^{*}}
\newcommand{\Nn}{\mathbb{N}}
\newcommand{\operint}[2]{#1\left(\frac{S([t#2]+1)}{t}\right)}
\newcommand{\rmref}[1]{{\rm (\ref{#1})}}
\newcommand{\rtref}[1]{{\rm \ref{#1}}}
\newcommand{\Rr}{\hbox{\rm I\kern-2.25pt R}}
\newcommand{\seq}[1]{(#1_{i})_{i\in \nst}}
\newcommand{\seqn}[1]{(#1_{i})_{i=1}^{n}}

\section{Introduction}

For a given interval $I\in \Rr$, let $C(I)$ be the class of continuous functions $g:I\rightarrow \Rr$. The aim of this paper is to study the approximation of a function
$g \in C([0,\infty))$ in terms of its Laplace transform.  To this end, we assume that $|g(u)|=O(e^{\gamma u})$ as $u\rightarrow \infty$, for some $\gamma\geq 0$.  Then, the Laplace transform of $g$
\begin{equation}
\tilde{g}(t):=\int_0^\infty{e^{-tu}g(u)du}\, , \quad t>\gamma
\label{dflapt}
\end{equation}
is well defined and infinitely differentiable.  Moreover, we can approximate $g$ in terms of the derivatives of its Laplace transform. From now on, for a given function $g$, $g^{(n)}$ will denote its n-th derivative ($g^{(0)}:=g$).  We define the following operator \begin{equation}
L_{t}^{*}g(u) =\frac{(-t)^{[tu]+1} }{ \Gamma([tu]+1)}\tilde{g}^{([tu])}(t)
\, ,\quad u\geq 0,\  t>\gamma\label{delets}
\end{equation}
where $[u]$ indicates the largest
integer less than or equal to $u$ and $\Gamma(\cdotp)$ is the gamma function.  The approximation properties of (\ref{delets}), as $t\rightarrow \infty$, can be studied taking into account that the previous formula admits the following representation (cf. \cite[example (c), p. 92]{addire}).  Let $(S(t),\ t\geq0)$ be a collection of random variables such that $S(0)=0$ and, for each $t>0$, $S(t)$ is a gamma $\Gamma(t,1)$ random variable.  Recall that a gamma $\Gamma(\alpha,\beta)$ random variable has density given by \begin{equation}f_{\alpha,\beta}(u):=\frac{1}{\Gamma(\alpha)}\beta^{\alpha} u^{\alpha-1}e^{-\beta u},   \quad u\geq 0\quad \beta>0, \  \alpha> 0.\label{gamden} \end{equation}  By differentiation under the integral sign in (\ref{dflapt}) it can be seen that
\begin{equation}
L_{t}^{*}g(u)
=Eg\left(\frac{S([tu]+1)}{t}\right),\quad u\geq 0,\  t>\gamma
\label{repgam}\end{equation}
A modification of the operator defined in (\ref{repgam}) was used in \cite{saunif} to approximate the distribution function $F_{X}$ of a nonnegative random variable $X$ by means of its Laplace-Stieltjes transform, using that (\ref{repgam}), when $g=F_{X}$ can be rewritten in terms of a well-known inversion formula for this transform (in the beginning of Section 3.2. we explain this connection).  By considering the inversion formula (\ref{delets}) using the Laplace transform instead of the Laplace-Stieltjes transform, we can therefore widen the class of functions under consideration, using, at the same time the general convergence results given in  \cite{saunif}.  The connection of both inversion formulas through the same operator was considered also in \cite{addire}. Also, it is interesting to point out that similar approximation formulas involving the Laplace transform, have been used in the literature in order to obtain results concerning characterizations of life distributions in reliability and shape properties of renewal functions (see \cite{kelapl} and the references therein).  Moreover, in a recent paper (cf. \cite{matenu}), we can find interesting numerical comparisons for different inversion formulas involving the Laplace-Stieltjes transform of measures concentrated on the positive semiaxis. It should be mentioned that (\ref{repgam}), applied to distribution functions, is the so-called Widder's formula in  \cite{matenu} (in Section 3.2. we give more details).

\begin{remark}The well-known Post Widder's inversion formula for Laplace transforms, that is,
\begin{equation}W_{t}g(u) =\frac{(-1)^{t-1} }{(t-1)!}\left(\frac{t}{u}\right)^{n}\tilde{g}^{(t-1)}(t/u),\quad u>0,\ t\in \Nn\label{postwi}\end{equation}
admits a similar probabilistic interpretation (cf. Feller {\rm \cite[p.233]{feanin}}), as we have
\begin{equation}W_{t}g(u) =Eg\left(\frac{uS(t)}{t}\right)\,\quad u>0,\ t\in \Nn\label{wirepr}\end{equation}
Observe the main differences of the inversion formula \rmref{delets} with Post-Widder inversion. In the first one, for $t$ fixed, the order of differentiation increases with $u$, whereas the point at which the Laplace transform is applied remains fixed.  In Post-Widder inversion, for $t$ fixed, the order of differentiation is fixed, whereas the point at which the Laplace transform is applied varies with $u$. \label{powide}\end{remark}

%

Our aim in this paper is to construct, as in \cite{saunif} an accelerated approximation to (\ref{delets}) and apply it to functions satisfying a defective renewal equation of the form:
\begin{equation}
m(u)=\phi\int_{0}^{u}m(u-y)dF(y)dy+v(u),\quad u\geq0, \label{renewa}\end{equation}
in which $F$ is the distribution function of a nonnegative random variable with $F(0)=0$, $\phi$ is a parameter such that $0<\phi<1$ and $v:[0,\infty ) \rightarrow \Rr$ is a locally bounded function. Many quantities of interest in applied probability (ruin probability in insurance theory for instance) satisfy a defective renewal equation. For specific references, along with the properties we are going to mention below, see for instance \cite[p.152]{wililu}.  It is known that there is a unique locally bounded solution of (\ref{renewa}).  In fact, let $F^{*n}$ the n-th convolution of $F$ with itself ($F^{0}$ being the point mass at 0). Define $G:=\sum_{n=0}^{\infty}(1-\phi)\phi^{n}F^{*n}$.  Then, the above mentioned solution to (\ref{renewa}) is given by
\begin{equation}m(u)=\frac{1}{1-\phi}\int_{(0,u]}v(u-y)dG(y)+v(u),\quad u\geq 0. \label{rensol} \end{equation} Only in very specific situations one can find an explicit solution to $m$ using (\ref{rensol}).  From now on, we will assume that $F$ is absolutely continuous, having density $f$, and therefore, (\ref{renewa}) becomes
\begin{equation}
m(u)=\phi\int_{0}^{u}m(u-y)f(y)dy+v(u),\quad u\geq0, \label{renewc}\end{equation}
Note that in this case the Laplace transform of $m$ can be written, using (\ref{renewc}) as
\begin{equation}\tilde{m}(t)=\frac{{\tilde v}(t)}{1-\phi{\tilde f}(t)}\label{lapren}\end{equation}
and we can build the approximation $L_{t}^{*}m$ defined in (\ref{delets}), in cases that $m$ cannot be computed in an explicit way.

The paper is organized as follows.  In the next Section we introduce a modification of the operator defined in (\ref{delets}) improving the rate of approximation and apply it to renewal functions, investigating conditions under which the rate of uniform convergence is optimal.  In Section 3 we consider a particular application of or our results in the context of ruin theory in insurance risk models.
\section{The accelerated approximation.  Application to renewal functions}
In order to improve the rate of approximation given by (\ref{delets}), we will consider, in a similar way as in \cite{saunif}  the following accelerated approximation for a given function $g\in C([0,\infty))$
\begin{equation}M_{t}^{[2]}g\left(\frac{k}{t}\right)=
    \left\{%
\begin{array}{ll}
   g(0), & \hbox{if } k=0; \\\displaystyle

2L_{2t}^{*}g\left(\frac{2k-1}{2t}\right)-L_{t}^{*}g\left(\frac{k-1}{t}\right)
 & \hbox{if }k=1,2,\dots \\
\end{array}%
\right.\label{dftile}\end{equation}
and for $u>0$ such that $u\neq k/t,\ k=1,2,\dots$,
\begin{equation}M_{t}^{[2]}g(u)=(tu-[tu])M_{t}^{[2]}g\kern-2pt\left(\frac{[tu]+1}{t}\kern-2pt\right)+([tu]+1-tu) M_{t}^{[2]}g\kern-2pt\left(\frac{[tu]
}{t}\kern-2pt\right).\label{dftilt}\end{equation}
$M_{t}^{[2]}g$ improves the initial order of convergence of $L_{t}^{*}g$ (at most $1/t$) to $1/t^{2}$, for suitable functions. In particular, in \cite{saunif} a class of functions was studied under which this order of convergence holds uniformly.  We now introduce this class of functions.

From now on we will denote by $C^{n}([0,\infty))$ the subclass of functions in  $C([0,\infty))$ having continuous
n-th derivative on $(0,\infty)$. Also, for a given function $g:I\rightarrow \Rr$, $\|g\|$ will denote its uniform norm, that is
\[\|g\|:=\sup_{u\in I} |g(u)|.\]
For a given subset $A\subset I$, we will use the notation $\|g\|_{A}:=\sup_{u\in A} |g(u)|$.  We introduce the following class of functions:

\begin{equation}
{\cal D}_{1}:=\{g\in C^{4}([0,\infty)): \quad \|g''(u)\|< \infty
\ \hbox{and} \
\|u^{2}g^{iv}(u)\|<\infty\}.\label{dfcla2}\end{equation}
\begin{remark} For $g\in {\cal D}_{1}$ we have that $\|u g'''(u)\|\leq \|u^{2}g^{iv}(u)\|<\infty$ (cf. {\rm \cite[p.571]{saunif}}).  The quantity $\|u g'''(u)\|$ will also appear in our error bounds.\end{remark}

The following result will play an important role from now on.

\begin{theorem}[\cite{saunif}, p.571] Let $g\in {\cal D}_{1}$, with ${\cal D}_{1}$ as
defined in \rmref{dfcla2} and let $M^{[2]}_{t}g, \ t>0$ be as defined
in \rmref{dftile}-\rmref{dftilt}.  We have
\[\|M^{[2]}_{t}g-g\|\leq \frac{1}{8t^{2}}\|g''(u)\|
+\frac{1}{6t^{2}}\|ug'''(u)\|+\frac{9}{16t^{2}}\|u^{2}g^{iv}(u)\|<\infty.\]
\label{tetota}\end{theorem}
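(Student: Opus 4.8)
The plan is to reduce the error $M^{[2]}_t g - g$ to a combination of Taylor-type remainders that can each be controlled by one of the three seminorms $\|g''\|$, $\|u g'''\|$, $\|u^2 g^{iv}\|$. The starting point is the probabilistic representation \rmref{repgam}: for $k\geq 1$ we have $L_t^* g(k/t) = E g(S(k)/t)$, where $S(k)$ is $\Gamma(k,1)$, and the analogous identity with $2t$ in place of $t$ and $2k-1$ in place of $k$. So the three quantities appearing in the combination $M_t^{[2]}g(k/t) = 2L_{2t}^*g((2k-1)/(2t)) - L_t^*g((k-1)/t)$ are expectations of $g$ evaluated at random points whose means I would first want to identify: $E[S(2k-1)/(2t)] = (2k-1)/(2t)$ and $E[S(k-1)/t] = (k-1)/t$, neither of which equals the target point $k/t$, but the affine combination of the two evaluation operators is designed so that the first two Taylor coefficients line up, giving the $1/t^2$ order.

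The key steps, in order: (1) Fix $u = k/t$ with $k\geq 1$ and expand $g(S(m)/t)$ around $u$ to fourth order with integral remainder, $g(x) = g(u) + g'(u)(x-u) + \tfrac12 g''(u)(x-u)^2 + \tfrac16 g'''(u)(x-u)^3 + R_4$, applied at $m = 2k-1$ (scale $2t$) and $m = k-1$ (scale $t$). (2) Take expectations and use the moments of the gamma distribution: $E[S(m)/s] = m/s$, $E[(S(m)/s - u)^2]$, etc., expressed in terms of centered moments of $\Gamma(m,1)$, which are polynomials in $m$. (3) Verify the algebraic cancellations: the constant terms reproduce $g(u)$, the $g'(u)$ terms cancel in $2(\cdot) - (\cdot)$ since $2\cdot\tfrac{2k-1}{2t} - \tfrac{k-1}{t} = \tfrac{k}{t} = u$, and the $g''(u)$ and $g'''(u)$ contributions combine to something of size $O(1/t^2)$ rather than $O(1/t)$ — this is the crux of why the accelerated operator works. (4) Bound the leftover second-order term, which carries a factor $\|g''\|$ and a numerical constant I expect to be $1/8$ after optimizing over $k$; bound the third-order leftover, which after writing $u = k/t$ produces the factor $\|u g'''\|$ with constant $1/6$; and bound the fourth-order remainder $R_4$, controlling $(x-u)^4$ against $x^2(x-u)^2/u^2$-type quantities (since the remainder naturally involves $\xi^2 g^{iv}(\xi)$ for intermediate $\xi$) to extract $\|u^2 g^{iv}\|$ with constant $9/16$. (5) Finally handle general $u>0$ (not of the form $k/t$) via the linear interpolation \rmref{dftilt}: since $M_t^{[2]}g(u)$ is a convex combination of the values at $[tu]/t$ and $([tu]+1)/t$, and $g$ is close to its own linear interpolant at those nodes up to an error controlled by $\|g''\|$ times the squared mesh (hence $O(1/t^2)$), the bound at lattice points propagates to all $u$.

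The main obstacle is step (3)–(4): carrying out the moment computations for $\Gamma(m,1)$ and tracking all terms through the affine combination so that the $O(1/t)$ pieces genuinely vanish and the surviving $O(1/t^2)$ pieces come with the sharp constants $1/8$, $1/6$, $9/16$. This requires care in two places: first, because the two gamma laws have different shape parameters ($2k-1$ versus $k-1$) and different scalings ($2t$ versus $t$), the moment expressions do not match term-by-term and one must expand in powers of $1/t$ and collect; second, the fourth-order remainder must be estimated with the singularity at the origin in mind — when $k=1$ the point $(k-1)/t = 0$ and $S(0)=0$, so $g'''$ and $g^{iv}$ are only assumed continuous on $(0,\infty)$, which is exactly why the error bounds are phrased with the weights $u\,g'''(u)$ and $u^2 g^{iv}(u)$ rather than the raw sup norms. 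Since the theorem is quoted verbatim from \cite{saunif}, I would at this point simply cite that reference for the detailed constants; but the structure above is the proof one would reconstruct.
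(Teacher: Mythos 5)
The paper itself gives no proof of Theorem \ref{tetota}: it is imported verbatim from \cite{saunif} (p.~571) and used as a black box, so your decision to defer to that reference for the exact constants is precisely what the paper does, and your Taylor-expansion-plus-gamma-moments outline is a reasonable reconstruction of the argument in the cited source. One slip in your sketch is worth correcting, since it would propagate through all the moment computations: by \rmref{repgam}, $L_t^*g(u)=Eg\bigl(S([tu]+1)/t\bigr)$, so at the evaluation points $(2k-1)/(2t)$ (parameter $2t$) and $(k-1)/t$ (parameter $t$) the gamma shapes are $2k$ and $k$, not $2k-1$ and $k-1$. Hence both variables $S(2k)/(2t)$ and $S(k)/t$ have mean exactly $k/t$, so the $g'$ terms vanish individually (no cancellation between the two operators is needed at first order), and the acceleration comes from the variances: $2\cdot\frac{2k}{(2t)^2}-\frac{k}{t^2}=0$, which kills the $g''$ contribution at lattice points exactly, not merely down to $O(1/t^2)$ as you hedge in step (3). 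Consequently the $\frac{1}{8t^2}\|g''\|$ term should be expected to arise from the linear-interpolation step \rmref{dftilt} (interpolation error of $g$ on a mesh of width $1/t$), rather than from a leftover second-order term ``optimized over $k$'' as in your step (4), while the lattice-point error supplies the weighted $\|ug'''(u)\|$ and $\|u^2g^{iv}(u)\|$ terms; your explanation of why the weights are needed near the origin is on target.
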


\begin{remark} It would be interesting to compare the accelerated inversion formula $M_{t}^{[2]}g$ with a similar procedure for the Post-Widder inversion formula (\ref{postwi}), given by \[G_{t}^{[2]} g(u) := 2W_{2t}g(u) -W_{t}g(u).\] Previous expression is the classical Stehfest enhancement of order two for the Post–Widder formula. A numerical comparison example between both accelerated inversion formulas was given in {\rm \cite[Example 2.1. p. 564]{saunif}}. The test function considered was $g(u)=1-(1-p)e^{-pu}, u\geq 0$,  and the parameters taken were $p=0.1$ and $t=5$. The advantage of this function is that both approximations can be easily computed.  The numerical performance of each method for different values of $u$ can be seen in {\rm \cite[Table 2, p. 567]{saunif}}, being $M_{t}^{[2]}g$ more accurate, specially for big values of $u$. Roughly speaking, the better accuracy of $M_{t}^{[2]}g(u)$ can be explained by the fact that the variability of the  underlying random variable defining $L_{t}^{*}g(u)$ (recall \rmref{repgam})  has, for fixed $t$ and $u\rightarrow \infty$, less variability than the one defining the Post-Widder operator (recall \rmref{wirepr}). On the other hand, the use of $M^{[2]}_{t}g$ instead of $L_{t}^{*}g$, improves substantially the numerical performance of the approximation for values $u$ close to the origin (as noticed in {\rm \cite{matenu}}, Widder's formula has less precision for values $u$ close to the origin).  See {\rm \cite[Example 2.1. p. 564]{saunif}} for a more detailed discussion. \label{renume}\end{remark}

Our aim is to consider the renewal function given in (\ref{renewc}) in order to obtain conditions on $F$ and $v$ such that $m\in {\cal D}_{1}$ , with ${\cal D}_{1}$ as defined (\ref{dfcla2}).  In this case,  by Theorem \ref{tetota}, $M_{t}^{[2]}m$, as defined in (\ref{dftile})-(\ref{dftilt}) has order of convergence $1/t^{2}$.  To this end, we need suitable expressions for the derivatives of $m$.  From now on, we will denote by $C^{n}_{0}([0,\infty))$ to the subclass of functions in $C^{n}([0,\infty))$ such that $\displaystyle \lim_{t\downarrow 0}g^{(k)}(t)$ exists for all $k=0,1,\dots n$ and is finite.

First of all we state a technical lemma in order to justify differentiation under the integral sign in expressions similar to (\ref{renewc}).  This result will be systematically used along the paper.
\begin{lemma} Let $c:(0,\infty)\rightarrow \Rr$ be a function satisfying
\[c(u)=\int_{0}^{u}a(u-z)b(z)dz,\quad u>0,\]
in which $a\in C^{1}_{0}([0,\infty))$ and $b\in C((0,\infty))$  is such that $\int_{0}^{u}|b(z)|dz<\infty$ for all $u>0$.  Then $c$ is differentiable for all $u>0$ and verifies
\[c'(u)=\int_{0}^{u}a'(u-z)b(z)dz+a(0)b(u)\]\label{letech}
\end{lemma}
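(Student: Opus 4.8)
The plan is to differentiate the convolution $c(u)=\int_0^u a(u-z)b(z)\,dz$ directly from the difference quotient. First I would change variables to move the $u$-dependence out of the argument of $b$: writing $w=u-z$ gives $c(u)=\int_0^u a(w)b(u-w)\,dw$, which is less convenient because now $b$ (only assumed continuous, not $C^1$) carries the $u$; so instead I keep the original form $c(u)=\int_0^u a(u-z)b(z)\,dz$ and exploit that $a\in C^1_0([0,\infty))$. For $h>0$ consider
\[
\frac{c(u+h)-c(u)}{h}=\frac1h\int_0^{u+h}a(u+h-z)b(z)\,dz-\frac1h\int_0^u a(u-z)b(z)\,dz.
\]
Split the first integral at $u$: the piece over $(u,u+h]$ is $\frac1h\int_u^{u+h}a(u+h-z)b(z)\,dz$, and over $(0,u]$ I combine the two integrals into $\int_0^u \frac{a(u+h-z)-a(u-z)}{h}\,b(z)\,dz$.

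For the second term, since $a\in C^1$ on $(0,\infty)$ and extends continuously with a finite limit at $0$, the mean value theorem gives $\frac{a(u+h-z)-a(u-z)}{h}=a'(\xi_{z,h})$ for some $\xi_{z,h}\in(u-z,u+h-z)$, and this converges pointwise to $a'(u-z)$ as $h\downarrow 0$ for each $z\in(0,u)$; moreover it is bounded by $\sup_{[0,u+1]}|a'|$ (finite, using $a\in C^1_0$) uniformly in $z\in(0,u)$ and small $h$, so dominated convergence against the integrable weight $|b(z)|$ yields $\int_0^u a'(u-z)b(z)\,dz$. For the first term, $\frac1h\int_u^{u+h}a(u+h-z)b(z)\,dz$: substituting $s=u+h-z$ turns it into $\frac1h\int_0^h a(s)b(u+h-s)\,ds$, and since $a$ is continuous at $0$ with value $a(0)$ and $b$ is continuous at $u$, this is a standard averaging limit tending to $a(0)b(u)$. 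One must also treat $h<0$ (the left difference quotient) symmetrically; the argument is identical with the roles of the endpoints reversed. Combining the two limits gives $c'(u)=\int_0^u a'(u-z)b(z)\,dz+a(0)b(u)$, and continuity of $u\mapsto c'(u)$ follows since both terms on the right are continuous in $u$ (the integral term again by dominated convergence, the second term since $b\in C((0,\infty))$).

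The main obstacle is the careful handling of the boundary term and the domination in the bulk term near $z=0$ and $z=u$: one needs the finite limit $\lim_{z\downarrow 0}a(z)=a(0)$ and $\lim_{z\downarrow 0}a'(z)$ guaranteed by $a\in C^1_0([0,\infty))$ to get a uniform bound on $a'$ on the relevant compact interval and to make the averaging limit $\frac1h\int_0^h a(s)b(u+h-s)\,ds\to a(0)b(u)$ rigorous, rather than something that could blow up. Everything else — the splitting of the integral, the change of variables, and the dominated-convergence step against $\int_0^u|b(z)|\,dz<\infty$ — is routine.
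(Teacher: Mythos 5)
Your proposal is correct and follows essentially the same route as the paper's proof: split the difference quotient into a bulk term over the common interval, handled by a mean--value/derivative bound on $a'$ (finite thanks to $a\in C^{1}_{0}([0,\infty))$) together with dominated convergence against the integrable weight $|b|$, and a boundary term over an interval of length $|h|$ treated as an averaging limit yielding $a(0)b(u)$. The only difference is cosmetic: for $h<0$ you split at the other endpoint $u+h$ and argue symmetrically, whereas the paper keeps a single decomposition for both signs of $h$ by extending $a$ linearly to $[-\delta,0)$; both devices are valid.
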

\begin{proof}
Consider $u>0$, let $0<\delta<u$ and let $0<|h|\leq \delta$. To show the assertion we will check that
\[\lim_{h\rightarrow 0}  \frac{1}{h}\left(\int_{0}^{u+h}a(u+h-z)b(z)dz-\int_{0}^{u}a(u-z)b(z)dz\right)=\int_{0}^{u}a'(u-z)b(z)dz+a(0)b(u)\]
Define $a(u)=a'(0)u+a(0)$ if $-\delta\leq u<0$ (in this way  $a(u)$ is continuously differentiable on $[-\delta,\infty)$). Observe firstly that we can write
\begin{align}&\frac{1}{h}\left(\int_{0}^{u+h}a(u+h-z)b(z)dz-\int_{0}^{u}a(u-z)b(z)dz\right)\nonumber\\=&\int_{0}^{u}\frac{1}{h}\left(a(u+h-z)-a(u-z)\right)b(z)dz
+\frac{1}{h}\int_{u}^{u+h}a(u+h-z)b(z)dz\label{princi}\end{align}
To deal with the first term, note that when $h<0$ and $z$ varies between 0 and u, then $u+h-z$ varies between $h$ and $u+h$, thus taking negative values.  That is why we defined $a$ on $[-\delta,\infty)$. Notwithstanding, as $a$ is differentiable on this interval, and $-\delta<h$, we have always $|h^{-1}(a(u+h-z)-a(u-z))b(z)|\leq \|a'\|_{[-\delta,u+\delta]}|b(z)|$. As the right-hand side is an integrable function, we can apply dominated convergence theorem to write
\begin{equation}\lim_{h\rightarrow 0}\int_{0}^{u}\frac{1}{h}\left(a(u+h-z)-a(u-z)\right)b(z)dz=\int_{0}^{u}a'(u-z)b(z)dz\label{prime}\end{equation}
As for the second term in (\ref{princi}), we can write
\[\frac{1}{h}\int_{u}^{u+h}a(u+h-z)b(z)dz=\frac{1}{h}\int_{u}^{u+h}(a(u+h-z)-a(u-z))b(z)dz+\frac{1}{h}\int_{u}^{u+h}a(u-z)b(z)dz\]
 To deal with the first term we see that  $|h^{-1}(a(u+h-z)-a(u-z))b(z)|\leq \|a'\|_{[-\delta,\delta]}\|b\|_{[u-\delta,u+\delta]}$, so that the the first term converges to 0 as $h\rightarrow 0$.  As clearly the second term converges to $a(0)b(u)$, we conclude that
\begin{equation}
\lim_{h\rightarrow 0}\frac{1}{h}\int_{u}^{u+h}a(u+h-z)b(z)dz=a(0)b(u)\label{second}\end{equation}
Therefore, (\ref{princi})-(\ref{second}) show our result. \end{proof}
\begin{remark} The integrability condition for $b$ in Lemma \ref{letech} is authomatically satisfied if $b\in C([0,\infty))$\label{relete}\end{remark}

Next result gives conditions under which $m$ is differentiable, along with useful expressions for its derivatives.

\begin{proposition} Let $m$ be the locally bounded solution of \rmref{renewc} with $f$ the density function of an absolutely continuous nonnegative random variable $Z$ with distribution function $F$. Assume that $F$ and $v$ are in $C^{2}_{0}([0,\infty))$ We have that $m\in C^{2}_{0}([0,\infty))$ and for all $u>0$
\begin{align}
 m'(u)&=\phi\left(\int_{0}^{u}m(u-y)f'(y)dy+m(u)f(0)\right)+v'(u),\label{deri1f}\\ m''(u)&=\phi\left(\int_{0}^{u}m'(u-y)f'(y)dy+m(0)f'(u)+ m'(u)f(0)\right)+v''(u).\label{deri2f}\end{align}
Moreover,
\begin{eqnarray}
m(0)&=&v(0),\qquad m'(0)=\phi v(0)f(0)+v'(0),\quad\hbox{and} \label{deri10}\\
m''(0)&=&\phi(v(0)f'(0)+m'(0)f(0))+v''(0)\label{deri20}\end{eqnarray}
In addition, if we define the functions
\begin{eqnarray}
w_{1}(u)&:=&\phi m(0)f(u)+v'(u)\label{vstuno}\\
w_{2}(u)&:=&\phi m'(0)f(u)+w_{1}'(u).\label{vstdos}
\end{eqnarray}
We have
\begin{align}
 m'(u)&=\phi\int_{0}^{u}m'(u-y)f(y)dy+w_{1}(u) ,\label{deri1m}\\ m''(u)&=\phi\int_{0}^{u}m''(u-y)f(y)dy+w_{2}(u). \label{deri2m}\end{align}
\label{proderi}\end{proposition}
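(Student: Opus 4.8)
The plan is to bootstrap the smoothness of $m$ directly from the renewal equation $m=\phi\,(m*f)+v$, applying Lemma~\ref{letech} repeatedly and, at each stage, assigning to the roles of $a$ and $b$ in that lemma the two convolution factors in such a way that the factor required to lie in $C^{1}_{0}([0,\infty))$ is one whose regularity is already known. Before Lemma~\ref{letech} can be used even once, I would first record that $m$ is continuous on $[0,\infty)$: writing $(m*f)(u)=\int_{0}^{u}m(w)f(u-w)\,dw$, the fact that $m$ is locally bounded together with $f$ being the restriction to $(0,\infty)$ of a continuous function on $[0,\infty)$ — which holds because $F\in C^{2}_{0}([0,\infty))$ forces $f=F'\in C^{1}_{0}([0,\infty))$, so $f$ is uniformly continuous on every bounded interval — yields that $m*f$, and hence $m$, is continuous. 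Setting $u=0$ in \rmref{renewc} gives $m(0)=v(0)$.

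For the first differentiation I would apply Lemma~\ref{letech} to $c(u):=(m*f)(u)=\int_{0}^{u}f(u-w)m(w)\,dw$ with $a=f\in C^{1}_{0}([0,\infty))$ and $b=m\in C([0,\infty))$ (the integrability hypothesis holds by Remark~\ref{relete}); after the change of variable this gives $c'(u)=\int_{0}^{u}m(u-y)f'(y)\,dy+f(0)m(u)$, so that $m'(u)=\phi c'(u)+v'(u)$ is exactly \rmref{deri1f}. Each term on the right is continuous on $(0,\infty)$ and has a finite limit as $u\downarrow 0$, the limit of the integral being $0$; hence $m\in C^{1}_{0}([0,\infty))$ and $m'(0)=\phi v(0)f(0)+v'(0)$, completing \rmref{deri10}. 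Now that $m\in C^{1}_{0}$, I would apply Lemma~\ref{letech} to the same $c(u)=\int_{0}^{u}m(u-w)f(w)\,dw$ but with the roles reversed, $a=m$ and $b=f$: this gives $c'(u)=\int_{0}^{u}m'(u-w)f(w)\,dw+m(0)f(u)$, hence $m'(u)=\phi\int_{0}^{u}m'(u-y)f(y)\,dy+\phi m(0)f(u)+v'(u)=\phi(m'*f)(u)+w_{1}(u)$, which is \rmref{deri1m}, with $w_{1}$ as in \rmref{vstuno}.

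The second differentiation proceeds from \rmref{deri1m} in the same two ways. Applying Lemma~\ref{letech} to $(m'*f)(u)=\int_{0}^{u}f(u-w)m'(w)\,dw$ with $a=f\in C^{1}_{0}$ and $b=m'\in C([0,\infty))$ gives $(m'*f)'(u)=\int_{0}^{u}m'(u-y)f'(y)\,dy+f(0)m'(u)$; since $w_{1}'(u)=\phi m(0)f'(u)+v''(u)$ (using $f\in C^{1}$ and $v\in C^{2}$) and $m(0)=v(0)$, collecting the terms of $m''(u)=\phi(m'*f)'(u)+w_{1}'(u)$ yields \rmref{deri2f}. Again the right-hand side is continuous on $(0,\infty)$ with a finite limit at $0$, so $m\in C^{2}_{0}([0,\infty))$, and evaluating that limit gives \rmref{deri20}. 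Finally, since we now know $m'\in C^{1}_{0}([0,\infty))$, applying Lemma~\ref{letech} to $(m'*f)(u)=\int_{0}^{u}m'(u-w)f(w)\,dw$ with $a=m'$ and $b=f$ gives $(m'*f)'(u)=\int_{0}^{u}m''(u-w)f(w)\,dw+m'(0)f(u)$, whence $m''(u)=\phi(m''*f)(u)+\phi m'(0)f(u)+w_{1}'(u)=\phi(m''*f)(u)+w_{2}(u)$ with $w_{2}$ as in \rmref{vstdos}, which is \rmref{deri2m}.

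The main obstacle is organizational rather than computational: the two ``convolution'' forms \rmref{deri1m} and \rmref{deri2m} can only be obtained after $m$, respectively $m'$, is known to belong to $C^{1}_{0}([0,\infty))$, so the four identities must be established in the order above, and at each stage one must check that the relevant one-sided limit at the origin is finite in order to stay inside the classes $C^{n}_{0}$ and to keep the integrability hypothesis of Lemma~\ref{letech} available through Remark~\ref{relete}. The only other point needing care is the preliminary continuity of $m$ itself, which is what makes the bootstrap possible in the first place.
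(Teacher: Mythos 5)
Your proof is correct and follows essentially the same route as the paper: bootstrap regularity from \rmref{renewc} by repeated use of Lemma~\ref{letech}, choosing as the factor $a$ whichever convolution factor ($f$ or the already-established derivative of $m$) is known to lie in $C^{1}_{0}([0,\infty))$, and checking the one-sided limits at the origin at each stage. The only (immaterial) difference is that you obtain \rmref{deri2f} by differentiating \rmref{deri1m} and using $w_{1}'$ together with $m(0)=v(0)$, whereas the paper differentiates \rmref{deri1f} directly and leaves \rmref{deri1m}--\rmref{deri2m} to the end.
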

\begin{proof}First of all, from (\ref{renewc}), the fact that $m$ is locally bounded and the continuity of $v$ we deduce that $m$ is continuous on $[0,\infty)$.  Also, the first equality in (\ref{deri10}) follows by (\ref{rensol}.
 Secondly, making the change of variable $z=u-y$ in the integral contained in (\ref{renewc}) we can write
\[
m(u)=\phi\int_{0}^{u}m(z)f(u-z)dz+v(u),\quad u>0,\]
Differentiating the previous expression (recall Lemma \ref{letech} and Remark \ref{relete}), we obtain
\[
m'(u)=\phi\left(\int_{0}^{u}m(z)f'(u-z)dz+ m(u)f(0)\right)+v'(u),\quad u>0,\]
this shows that $m$ is continuoslly differentiable on $(0,\infty)$.
If we make now the change of variable $y=u-z$ in the previous expression we obtain (\ref{deri1f}).  Also, taking limits in this expression as $u\downarrow 0$, we obtain \[\lim_{u\downarrow 0}m'(u)=\phi m(0)f(0)+v'(0)=\phi v(0)f(0)+v'(0),\] thus showing that $m\in C_{0}^{1}([0,\infty))$ and the second equality in (\ref{deri10}). Now, (\ref{deri2f}) follows by differentiation in (\ref{deri1f}), and (\ref{deri20}) follows taking limits in (\ref{deri2f}) as $u\downarrow 0$

Finally once it is shown that $m\in C^{2}_{0}([0,\infty))$, Lemma \ref{letech} and Remark \ref{relete} allow us to make a straightforward differentiation in (\ref{renewc}) to obtain (\ref{deri1m}) and (\ref{deri2m}).
\end{proof}

In order to obtain bounds of the derivatives of $m$, the following technical result will be useful.

\begin{proposition} Let $m_{2}$ be a function satisfying
\begin{equation}
m_{2}(u)=\phi\int_{0}^{u}m_{1}(u-y)f_{1}(y)dy+v_{1}(u),\quad u>0, \label{renede}\end{equation}
in which $0<\phi<1$, $m_{1}$ and $v_{1}$ are continuous functions on $[0,\infty)$ and $f_{1}$ is a continuous function on $(0,\infty)$. Let
\begin{equation}
I_{i}(f_{1}):=\int_{0}^{\infty}y^{i}|f_{1}|(y)dy,\quad i=0,1,2,\dots\label{dfidef}\end{equation}
\begin{enumerate}[(a)]
\item If
$I_{0}(f_{1})<\infty$, then
\begin{equation}
\|m_{2}\|_{[0,x]}\leq \phi I_{0}(f_{1})\|m_{1}\|_{[0,x]}+\|v_{1}\|,\quad x>0, \label{reneb1}\end{equation}
\item If
$I_{i}(f_{1})<\infty,\ i=0,1$ we have, for all $x>0$
\begin{equation}
\kern-30pt\|um_{2}(u)\|_{[0,x]}\leq \phi \left( I_{0}(f_{1})\|um_{1}(u)\|_{[0,x]}+I_{1}(f_{1})\|m_{1}(u)\|_{[0,x]}\right)+\|uv_{1}(u)\|. \label{reneb2}\end{equation}
\item If
$I_{i}(f_{1})<\infty,\ i=0,1,2$ then, for all $x>0$
\begin{eqnarray}
\|u^{2}m_{2}(u)\|_{[0,x]}&\leq &\phi\left( I_{0}(f_{1})\|u^{2}m_{1}(u)\|_{[0,x]}+2I_{1}(f_{1})\|um_{1}(u)\|_{[0,x]}\right.\nonumber\\&&\left.+I_{2}(f_{1})\|m_{1}\|_{[0,x]}\right)+\|u^{2}v_{1}(u)\|.\label{reneb3}
\end{eqnarray}
\end{enumerate}\label{prcoun}
\end{proposition}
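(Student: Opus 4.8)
The plan is to prove all three bounds by the same elementary device: take absolute values in \rmref{renede}, and for the weighted estimates write the weight $u^{i}$ as a power of $(u-y)+y$ and expand binomially, so that every resulting term has the form $(u-y)^{j}y^{i-j}|m_1(u-y)|$, which on $[0,x]$ is dominated by $y^{i-j}\|u^{j}m_1(u)\|_{[0,x]}$ because $0\le u-y\le u\le x$. Integrating the surviving power $y^{i-j}$ against $|f_1|(y)\,dy$ over $[0,u]$ then produces exactly the constants $I_{i-j}(f_1)$ and the binomial coefficients appearing in \rmref{reneb1}--\rmref{reneb3}. Two facts are used throughout: for $u\in[0,x]$ and $0\le y\le u$ one has $u-y\in[0,x]$, so $(u-y)^{j}|m_1(u-y)|\le\|u^{j}m_1(u)\|_{[0,x]}$ for $j=0,1,2$ (these quantities are finite since $m_1$ is continuous on the compact set $[0,x]$); and $\int_{0}^{u}y^{i}|f_1|(y)\,dy\le I_{i}(f_1)$ for every $u>0$.

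For part (a) I would bound $|m_1(u-y)|\le\|m_1\|_{[0,x]}$ in the integral of \rmref{renede}, pull the constant out, use $\int_{0}^{u}|f_1|(y)\,dy\le I_{0}(f_1)$, add $|v_1(u)|\le\|v_1\|$, and take the supremum over $u\in[0,x]$. For part (b) I would multiply \rmref{renede} by $u$ and write $u=(u-y)+y$ under the integral sign; the term with factor $(u-y)|m_1(u-y)|\le\|um_1(u)\|_{[0,x]}$ contributes $\phi I_{0}(f_1)\|um_1(u)\|_{[0,x]}$ after integrating against $|f_1|(y)\,dy$, and the term with factor $y|m_1(u-y)|\le y\|m_1\|_{[0,x]}$ contributes $\phi I_{1}(f_1)\|m_1\|_{[0,x]}$; adding $u|v_1(u)|\le\|uv_1(u)\|$ and taking the supremum over $u\in[0,x]$ gives \rmref{reneb2}. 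Part (c) is the same after multiplying by $u^{2}$ and using $u^{2}=(u-y)^{2}+2(u-y)y+y^{2}$: the three pieces are controlled on $[0,x]$ by $\|u^{2}m_1(u)\|_{[0,x]}$, by $2y\|um_1(u)\|_{[0,x]}$, and by $y^{2}\|m_1\|_{[0,x]}$, yielding after integration the constants $I_{0}(f_1)$, $2I_{1}(f_1)$, $I_{2}(f_1)$; adding $u^{2}|v_1(u)|\le\|u^{2}v_1(u)\|$ and taking the supremum gives \rmref{reneb3}.

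I do not expect a genuine obstacle here; the only points needing a little care are (i) verifying that $u-y$ remains in $[0,x]$ so that the localized suprema $\|u^{j}m_1(u)\|_{[0,x]}$ legitimately dominate $(u-y)^{j}|m_1(u-y)|$, (ii) keeping track of the binomial coefficient $\binom{2}{1}=2$ that accounts for the factor $2$ multiplying $I_{1}(f_1)$ in \rmref{reneb3}, and (iii) noting that no finiteness assumption is placed on $v_1$, so the inequalities are trivially true when $\|v_1\|$, $\|uv_1(u)\|$ or $\|u^{2}v_1(u)\|$ is infinite, while the $m_1$-terms on the right-hand sides are automatically finite by continuity of $m_1$ on the compact interval $[0,x]$.
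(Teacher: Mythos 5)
Your proof is correct and follows essentially the same route as the paper: part (a) by taking norms directly, and parts (b), (c) via the binomial expansion $u^{n}=((u-y)+y)^{n}$ under the integral, bounding $(u-y)^{j}|m_{1}(u-y)|$ by $\|u^{j}m_{1}(u)\|_{[0,x]}$ and integrating $y^{i}|f_{1}|(y)$ to produce the constants $I_{i}(f_{1})$. Nothing further is needed.
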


\begin{proof}
Part (a) is straightforward taking norms in (\ref{renede}).  To prove (b) and (c) we use that $u^{n}=(u-y+y)^{n}=\sum_{i=0}^{n}{n\choose i}y^{i}(u-y)^{n-i}$.  Therefore, using (\ref{renede}), we can write
\[u^{n}m_{2}(u)=\phi\sum_{i=0}^{n}{n\choose i}\int_{0}^{u}(u-y)^{n-i}m_{1}(u-y)y^{i}f_{1}(y)dy+u^{n}v_{1}(u)\]
from which we deduce easily
\[\|u^{n}m(u)\|_{[0,x]}\leq \phi\sum_{i=0}^{n}{n\choose i}\|u^{n-i}m_{1}(u)\|_{[0,x]}\int_{0}^{\infty}y^{i}|f_{1}|(y)dy+\|u^{n}v_{1}(u)\|\]Then, (b) and (c) follow easily from the previous expression applied to $n=1$ and $n=2$, respectively.\end{proof}

Taking into account the previous result and using the expressions (\ref{deri1m}) and (\ref{deri2m}), next result gives bounds for the weighted derivatives of $m$. Later on (in Proposition \ref{prcond}) we will give sufficient conditions in order to ensure the finiteness of these bounds.

\begin{proposition} Let $m$ be the locally bounded solution to \rmref{renewc} with $f$ the density function of an absolutely continuous nonnegative random variable $Z$, with distribution function $F$ and finite variance. Assume that  $F$ and $v$ are in $C^{2}_{0}([0,\infty))$. We have the following
\begin{enumerate}[(a)]

\item Let $w_{1}$ be as defined in \rmref{vstuno}. We have
\begin{eqnarray}\|m'\|&\leq &\frac{\|w_{1}\|}{1-\phi}\label{renod0}\\
\|u m'(u)\|&\leq &\frac{\phi EZ\|m'\|+\|u  w_{1}(u)\|}{1-\phi}\label{renod1}\\
\|u^{2}m'(u)\|&\leq &\frac{\phi(2EZ\|um'(u)\|+EZ^{2}\|m'\|)+\|u^{2} w_{1}(u)\|}{1-\phi}\label{renod2}
\end{eqnarray}
\item Let $w_{2}$ be as defined in \rmref{vstdos}. We have
\begin{eqnarray}\|m''\|&\leq &\frac{\|w_{2}\|}{1-\phi}\label{renos0}\\
\|u m''(u)\|&\leq &\frac{\phi EZ\|m''\|+\|u w_{2}(u)\|}{1-\phi}\label{renos1}\\
\|u^{2}m''(u)\|&\leq &\frac{\phi(2EZ\|um''(u)\|+EZ^{2}\|m''\|+\|u^{2} w_{2}(u)\|}{1-\phi}\label{renos2}
\end{eqnarray}

\end{enumerate}\label{corfbo}\end{proposition}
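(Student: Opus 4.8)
The plan is to apply Proposition \ref{prcoun} directly, taking in each case the appropriate choice of the functions $m_1$, $f_1$ and $v_1$. The key observation is that the renewal-type identities (\ref{deri1m}) and (\ref{deri2m}) from Proposition \ref{proderi} express $m'$ and $m''$ as solutions of equations of exactly the form (\ref{renede}), but with $m_1=m_2$; that is, they are genuine (defective) renewal equations for $m'$ and $m''$ with the \emph{same} kernel $f$ and free terms $w_1$, $w_2$ respectively. So the strategy for part (a) is: feed (\ref{deri1m}) into Proposition \ref{prcoun} with $f_1=f$, $v_1=w_1$, $m_1=m_2=m'$, and with $I_i(f)=I_i(f_1)=EZ^i$ (using $F$ the distribution function of the nonnegative $Z$), for $i=0,1,2$; note $I_0(f)=1$ since $f$ is a density. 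Part (b) is identical with $m'$ replaced by $m''$ and $w_1$ by $w_2$.

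For part (a), first I would establish (\ref{renod0}) by applying Proposition \ref{prcoun}(a) to (\ref{deri1m}): this gives $\|m'\|_{[0,x]}\le \phi\|m'\|_{[0,x]}+\|w_1\|$ for every $x>0$, hence $\|m'\|_{[0,x]}\le \|w_1\|/(1-\phi)$, and letting $x\to\infty$ yields (\ref{renod0}). (One must check $\|m'\|_{[0,x]}<\infty$ to justify the subtraction; this follows because $m'$ is continuous on $[0,\infty)$ by Proposition \ref{proderi}, so it is bounded on each compact $[0,x]$.) Next, (\ref{renod1}) comes from Proposition \ref{prcoun}(b) applied to (\ref{deri1m}): $\|um'(u)\|_{[0,x]}\le\phi(I_0(f)\|um'(u)\|_{[0,x]}+I_1(f)\|m'\|_{[0,x]})+\|uw_1(u)\|$, i.e. $\|um'(u)\|_{[0,x]}\le\phi EZ\|m'\|_{[0,x]}+\|uw_1(u)\|+\phi\|um'(u)\|_{[0,x]}$; solving and using $\|m'\|_{[0,x]}\le\|m'\|$ then letting $x\to\infty$ gives (\ref{renod1}). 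Finally (\ref{renod2}) is Proposition \ref{prcoun}(c) applied to (\ref{deri1m}) in the same manner, using $I_2(f)=EZ^2<\infty$ (the hypothesis that $Z$ has finite variance) and bounding $\|m'\|_{[0,x]}$, $\|um'(u)\|_{[0,x]}$ by their suprema over $[0,\infty)$.

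The only genuine subtlety — and the step I would be most careful about — is the same in all six inequalities: Proposition \ref{prcoun} yields estimates on $[0,x]$ with a factor $\phi<1$ multiplying the same seminorm on the left, and one can only cancel it after knowing the relevant quantity is finite on $[0,x]$. For (\ref{renod0}) and (\ref{renos0}) this is just local boundedness of the continuous functions $m'$, $m''$. For the weighted versions, $\|u^k m'(u)\|_{[0,x]}\le x^k\|m'\|_{[0,x]}<\infty$ and similarly for $m''$, so finiteness on each $[0,x]$ is automatic; then the bound obtained is uniform in $x$, so the supremum over all $x$ — which is the full norm on $[0,\infty)$ — inherits it. One also needs the free terms $\|w_1\|$, $\|uw_1(u)\|$, $\|u^2 w_1(u)\|$ (and the $w_2$ analogues) to be finite for the bounds to be informative, but the proposition statement only asserts the inequalities, leaving finiteness of these bounds to be addressed later (as announced, in Proposition \ref{prcond}), so no work on that is needed here. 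Thus the proof is essentially a bookkeeping exercise: six invocations of Proposition \ref{prcoun}, each followed by an elementary rearrangement and a passage to the limit $x\to\infty$.
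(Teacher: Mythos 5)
Your proposal is correct and follows essentially the same route as the paper: apply Proposition \ref{prcoun} (parts (a), (b), (c)) to the renewal equations (\ref{deri1m}) and (\ref{deri2m}) with $m_{1}=m_{2}=m'$ (resp.\ $m''$), $f_{1}=f$, $v_{1}=w_{1}$ (resp.\ $w_{2}$), cancel the factor $\phi$ after noting finiteness of the restricted norms (continuity of $m'$, $m''$ from Proposition \ref{proderi}), and let $x\to\infty$. Your explicit justification of the cancellation in the weighted cases via $\|u^{k}m^{(i)}(u)\|_{[0,x]}\leq x^{k}\|m^{(i)}\|_{[0,x]}<\infty$ is a detail the paper leaves implicit, but the argument is the same.
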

\begin{proof}To show the bounds in (a), we use (\ref{deri1m}) and apply Proposition \ref{prcoun}, with $m_{2}=m_{1}=m'$, $v_{1}=w_{1}$ and $f_{1}=f$ being a density function.  Thus, $I_{1}(f)=1$ and using (\ref{reneb1}), we can write
\[\|m'\|_{[0,x]}\leq \phi \|m'\|_{[0,x]}+\|w_{1}\|,\quad 0<x<\infty, \]
As by Proposition \ref{proderi} $m'$ is a continuous function on $[0,\infty)$, then $\|m'\|_{[0,x]}<\infty$, from which we deduce
\[(1-\phi)\|m'\|_{[0,x]}\leq \|w_{1}\|,\quad 0<x<\infty, \]
Taking limits as $x\rightarrow \infty$ in the previous expression, (\ref{renod0}) holds true.  (\ref{renod1}) and  (\ref{renod2}) are shown in a similar way, using (\ref{reneb2}) and (\ref{reneb3}), respectively, and taking into account that $I_{i}(f)=EZ^{i}, \ i=1,2$. The proof of part (b) is similar, taking into account (\ref{deri2m}) and applying Proposition \ref{prcoun}, with $m_{2}=m_{1}=m''$, $v_{1}=w_{2}$ and $f_{1}=f$\end{proof}

Our next task is to give conditions for $m$ in order to ensure that $m\in {\cal D}_{1}$. First of all, we state a technical lemma in order to simplify our hypothesis.
\begin{lemma}We have
\begin{enumerate}[(a)]
\item Let $v_{1}\in C([0,\infty))$.  If $\|u^{2}v_{1}(u)\|<\infty$, then $\|v_{1}\|<\infty$ and $\|uv_{1}(u)\|<\infty$.
\item Let $f_{1}\in C((0,\infty))$.  Let $I_{i}(f_{1})$ be as defined in \rmref{dfidef}.  If $I_{i}(f_{1})<\infty, \ i=0,2$, then $I_{1}(f_{1})<\infty.$
    \end{enumerate}\label{letec2}\end{lemma}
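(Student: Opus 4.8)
The plan is to prove both parts by elementary estimates on integrals of the tail-weighted densities, exploiting the monotone structure of the weights $u\mapsto u^{i}$.

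For part (a), the key observation is the pointwise inequality $u\,|v_{1}(u)|\le \max(1,u^{2})\,|v_{1}(u)|/\min(1,u)$—or more cleanly: on the interval $[0,1]$ the factor $u^{2}\le 1$ is harmless but gives no control, so I would instead argue directly. Since $v_{1}\in C([0,\infty))$, it is bounded on the compact set $[0,1]$, say by $\|v_{1}\|_{[0,1]}<\infty$. For $u>1$ we have $u^{2}>1$, hence $|v_{1}(u)|\le u^{2}|v_{1}(u)|\le \|u^{2}v_{1}(u)\|$, so $\|v_{1}\|_{(1,\infty)}\le \|u^{2}v_{1}(u)\|<\infty$; combining gives $\|v_{1}\|=\max\bigl(\|v_{1}\|_{[0,1]},\|v_{1}\|_{(1,\infty)}\bigr)<\infty$. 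For the weighted norm, split again at $u=1$: on $[0,1]$, $u|v_{1}(u)|\le |v_{1}(u)|\le \|v_{1}\|<\infty$ by the bound just obtained; on $(1,\infty)$, $u|v_{1}(u)|\le u^{2}|v_{1}(u)|\le \|u^{2}v_{1}(u)\|<\infty$. Taking the supremum over both pieces yields $\|uv_{1}(u)\|<\infty$.

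For part (b), the idea is the same interpolation between the $i=0$ and $i=2$ moments, now in integral form. Split the integral defining $I_{1}(f_{1})$ at $y=1$:
\[
I_{1}(f_{1})=\int_{0}^{1}y\,|f_{1}|(y)\,dy+\int_{1}^{\infty}y\,|f_{1}|(y)\,dy.
\]
On $[0,1]$ we bound $y\le 1$, so the first integral is at most $\int_{0}^{1}|f_{1}|(y)\,dy\le I_{0}(f_{1})<\infty$. On $(1,\infty)$ we bound $y\le y^{2}$, so the second integral is at most $\int_{1}^{\infty}y^{2}|f_{1}|(y)\,dy\le I_{2}(f_{1})<\infty$. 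Adding the two finite bounds gives $I_{1}(f_{1})\le I_{0}(f_{1})+I_{2}(f_{1})<\infty$. (Continuity of $f_{1}$ on $(0,\infty)$ together with finiteness of $I_{0}$ and $I_{2}$ guarantees these integrals are well-defined Lebesgue integrals; no further regularity is needed.)

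There is no real obstacle here—the statement is a convexity/interpolation fact for power weights. The only point that requires a moment's care is that the weight $u^{2}$ degenerates near the origin, so one cannot simply "divide by $u^{2}$" globally; the fix, used in both parts, is the split at $u=1$ (or $y=1$) where below the cutoff one uses the trivial bound on the weight and the smaller available moment, and above the cutoff one uses monotonicity of the weight to pass to the larger moment. I would present part (a) first since it is the one actually invoked later (to reduce hypotheses on $v$ to a single condition $\|u^{2}v_{1}(u)\|<\infty$), and part (b) second, noting in passing that an entirely analogous three-way split would show $I_{1}<\infty$ whenever $I_{0},I_{k}<\infty$ for any $k\ge 1$.
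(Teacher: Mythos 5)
Your proof is correct. For part (a) you argue exactly as the paper does: continuity gives the bound on $[0,1]$, and on $(1,\infty)$ the inequalities $1\le u\le u^{2}$ let you absorb both $\|v_{1}\|_{(1,\infty)}$ and $\|uv_{1}(u)\|_{(1,\infty)}$ into $\|u^{2}v_{1}(u)\|$; this is the same two-piece estimate $\|u^{i}v_{1}(u)\|\le \|v_{1}\|_{[0,1]}+\|u^{2}v_{1}(u)\|_{(1,\infty)}$, $i=0,1$. For part (b), however, you take a genuinely different route: you split the integral at $y=1$ and use $y\le 1$ below and $y\le y^{2}$ above, obtaining the additive bound $I_{1}(f_{1})\le I_{0}(f_{1})+I_{2}(f_{1})$, whereas the paper applies the Cauchy--Schwarz inequality to $y|f_{1}|(y)=y|f_{1}|^{1/2}(y)\cdot|f_{1}|^{1/2}(y)$ and gets the multiplicative bound $I_{1}(f_{1})\le \bigl(I_{0}(f_{1})I_{2}(f_{1})\bigr)^{1/2}$. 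Your argument is more elementary (no H\"older-type inequality needed) and, as you observe, extends immediately to show $I_{1}<\infty$ whenever $I_{0},I_{k}<\infty$ for any $k\ge 1$; the paper's Cauchy--Schwarz argument yields the sharper log-convexity-of-moments constant, since $\sqrt{I_{0}I_{2}}\le I_{0}+I_{2}$, though for the lemma's purpose (finiteness alone, which is all that is invoked in Proposition 2.4) the two bounds are interchangeable.
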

    \begin{proof}
 To show part (a), let $v_{1}\in C([0,\infty)$. The continuity implies $ \|v_{1}\|_{[0,1]}<\infty$ and the result is immediate using the the following bound
\[\|u^{i}v_{1}(u)\|\leq \|v_{1}\|_{[0,1]}+\|u^{2}v_{1}(u)\|_{(1,\infty)}<\infty\quad i=0,1\]

    Part (b) is due to Cauchy-Schwartz's inequality, as using that $u|f_{1}|(u)=u|f_{1}|^{1/2}(u)\cdotp |f_{1}|^{1/2}(u)$
\[I_{1}(f_{1})=\int_{0}^{\infty }u|f_{1}|(u)du\leq \left(\int_{0}^{\infty} u^{2}|f_{1}|(u)du\right)^{1/2}\left(\int_{0}^{\infty} |f_{1}|(u)du\right)^{1/2}= (I_{2}(f_{1})I_{0}(f_{1}))^{1/2}\]\end{proof}

Now we enunciate the main result of this section.
\begin{proposition} Let $m$  be the locally bounded solution to \rmref{renewc} with $f$ the density function of an absolutely continuous nonnegative random variable $Z$ with distribution function $F$. Assume that
\begin{enumerate}
\item $Z$ has finite variance.
\item $F$ and $v$ are in $C^{2}_{0}([0,\infty))$.
 \item $f'$ and $v''$ are in $C^{2}([0,\infty))$.
 \item $I_{i}(f^{''})<\infty$, $i=0,2$, where $I_{i}(\cdotp)$ is defined in \rmref{dfidef}.
 \item $\|u^{2}w_{i}(u)\|<\infty,\ i=1,2$, where  $w_{1}$, $w_{2}$, are defined in \rmref{vstuno} and \rmref{vstdos}, respectively.
 \item $\|u^{2}w''_{i}(u)\|<\infty,\ i=1,2$.
     \end{enumerate}
Then we have:

If Condition 2 is satisfied and in addition $\|w_{2}\|<\infty$, then,
\begin{equation}\|m''\|\leq \frac{\|w_{2}\|}{1-\phi}<\infty\label{bosec}\end{equation}

If Conditions 1-6 are satisfied, we have
\begin{align}\|u^{2}m'''(u)\|&\leq\phi \left((I_{0}(f'')+|f'(0)|)\|u^{2}m'(u)\|+2I_{1}(f'')\|um'(u)\|+I_{2}(f'')\|m'\|\right)\nonumber\\&+
\phi f(0)\|u^{2}m''(u)\|+\|u^{2}w''_{1}(u)\|<\infty \label{bothir}\\\|u^{2}m^{iv}(u)\|&\leq\phi \left((I_{0}(f'')+|f'(0)|)\|u^{2}m''(u)\|+2I_{1}(f'')\|um''(u)\|+I_{2}(f'')\|m''\|\right)
\nonumber\\&+\phi f(0)\|u^{2}m'''(u)\|+\|u^{2}w''_{2}(u)\|<\infty\label{bofour}\end{align}\label{prcond}
\end{proposition}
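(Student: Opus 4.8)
The plan is to bootstrap: apply Proposition \ref{proderi} twice in order to get renewal-type equations for $m'''$ and $m^{iv}$, and then feed those into Proposition \ref{prcoun} (with the weight $n=2$) to obtain the stated bounds; finiteness will follow from the hypotheses together with Proposition \ref{corfbo} and Lemma \ref{letec2}. First I would verify the easy bound \rmref{bosec}: Conditions 2 together with $\|w_{2}\|<\infty$ are exactly the hypotheses of Proposition \ref{corfbo}(b), so \rmref{renos0} gives $\|m''\|\leq \|w_{2}\|/(1-\phi)<\infty$ immediately. (I should also note that Condition 5 for $i=2$ forces $\|w_{2}\|<\infty$ via Lemma \ref{letec2}(a), so under Conditions 1--6 the quantity $\|m''\|$ is automatically finite, and likewise $\|m'\|$, $\|um'(u)\|$, $\|u^{2}m'(u)\|$, $\|um''(u)\|$, $\|u^{2}m''(u)\|$ are all finite by Proposition \ref{corfbo}, using Conditions 1, 5, and Lemma \ref{letec2}(a) to control $\|u^{i}w_{j}(u)\|$ for $i=0,1,2$.)

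Next I would differentiate. Starting from \rmref{deri1m}, namely $m'(u)=\phi\int_{0}^{u}m'(u-y)f(y)dy+w_{1}(u)$, I want to apply Proposition \ref{proderi} with the roles played by: solution $m'$, kernel density $f$, free term $w_{1}$. Condition 3 gives $f'\in C^{2}([0,\infty))$ and $v''\in C^{2}([0,\infty))$; combined with Condition 2 this should yield that $f\in C^{2}_{0}$ and $w_{1}=\phi m(0)f+v'\in C^{2}_{0}$, so the hypotheses of Proposition \ref{proderi} hold for this shifted equation. Formula \rmref{deri2f} (read with $m\to m'$, $f\to f$, $v\to w_{1}$, $m(0)\to m'(0)$) then gives
\[
m'''(u)=\phi\left(\int_{0}^{u}m''(u-y)f'(y)dy+m'(0)f'(u)+m''(u)f(0)\right)+w_{1}''(u).
\]
Rearranging the $f'$-convolution by integration by parts, or more simply by recognizing it as a renewal equation for $m'''$ with kernel $f$: I would instead use the representation analogous to \rmref{deri2m}, obtaining
\[
m'''(u)=\phi\int_{0}^{u}m'''(u-y)f(y)dy+w_{3}(u),\qquad w_{3}(u):=\phi\bigl(m''(0)f(u)+m'(0)f'(u)\bigr)+w_{1}''(u)
\]
(matching the pattern of \rmref{vstdos}). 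But the bound \rmref{bothir} as stated keeps the $f'$-convolution term explicitly, so the cleaner route is: take the expression with $\int m''(u-y)f'(y)dy$, apply Proposition \ref{prcoun}(c) ($n=2$) to it to extract $I_{0}(f'')+|f'(0)|$ type coefficients — wait, here the kernel is $f'$, whose moments $I_i(f')$ appear; to convert $I_i(f')$ bounds into $I_i(f'')$ and $|f'(0)|$ one integrates by parts, using $\int_0^\infty y^i |f'(y)|\,dy \le |f'(0)|\,[\![i=0]\!] + \int_0^\infty$ (handled termwise). This is where the coefficient $I_{0}(f'')+|f'(0)|$ in \rmref{bothir} comes from. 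Then I apply Proposition \ref{prcoun}(c) with $m_2=m_1=m'''$, kernel $f$, free term $w_3$, to peel off the $\phi f(0)\|u^2m'''(u)\|$ term and solve for $\|u^2m'''(u)\|$ as in the proof of Proposition \ref{corfbo}; finiteness of the right side uses Condition 4 ($I_i(f'')<\infty$, with $I_1(f'')<\infty$ by Lemma \ref{letec2}(b)) and Condition 6 ($\|u^2w_1''(u)\|<\infty$, hence $\|u^i w_1''(u)\|<\infty$, $i=0,1,2$, by Lemma \ref{letec2}(a)), plus the already-established finiteness of $\|m'\|,\|um'(u)\|,\|u^2m'(u)\|,\|u^2m''(u)\|$. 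The estimate \rmref{bofour} is obtained by the same argument one level up, differentiating the $m'''$-equation, producing $m^{iv}$ satisfying a renewal equation with free term involving $w_2''$, and invoking \rmref{bothir} to know $\|u^2m'''(u)\|<\infty$.

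The main obstacle is bookkeeping rather than conceptual: one must check that each successive differentiation is legitimate (i.e. that at each stage the current solution lies in $C^2_0$ and the kernel/free term satisfy the hypotheses of Lemma \ref{letech} and Proposition \ref{proderi}), and one must carefully track the integration-by-parts step that replaces moments of $f'$ by moments of $f''$ and the boundary value $f'(0)$ — this is the only place where the precise form of the coefficient $I_0(f'')+|f'(0)|$ in \rmref{bothir}--\rmref{bofour} is determined, and sign/limit issues (that $f'(y)\to 0$ as $y\to\infty$, which follows from $f'\in C^2([0,\infty))$ and $I_0(f'')<\infty$) need to be dispatched. Once these are in hand, the finiteness assertions are just the observation that every term on the right-hand sides is a finite constant times one of the previously bounded weighted norms.
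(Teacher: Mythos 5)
Your overall strategy (turn the differentiated renewal identities into inputs for Proposition \ref{prcoun}(c), then do the finiteness bookkeeping with Proposition \ref{corfbo} and Lemma \ref{letec2}) is the right family of ideas, and your treatment of \rmref{bosec} matches the paper, but the specific route you take for \rmref{bothir}--\rmref{bofour} does not produce the stated inequalities and has a genuine gap at the moment-conversion step. The paper integrates by parts \emph{before} differentiating: in \rmref{deri2f} one rewrites $\int_0^u m'(u-y)f'(y)dy=\int_0^u m(u-y)f''(y)dy-m(0)f'(u)+m(u)f'(0)$, so that $m''$ is a convolution of $m$ against $f''$ plus boundary terms (this is \rmref{deri2a}); differentiating that identity with Lemma \ref{letech} (taking $b=f''$, which only needs Conditions 3--4) gives $m'''(u)=\phi\bigl(\int_0^u m'(u-y)f''(y)dy+m'(u)f'(0)+m''(u)f(0)\bigr)+w_1''(u)$, and Proposition \ref{prcoun}(c) with $m_2=m'''$, $m_1=m'$, $f_1=f''$, $v_1=\phi\bigl(m'(u)f'(0)+m''(u)f(0)\bigr)+w_1''(u)$ yields exactly \rmref{bothir}, with no self-referential solving and no $(1-\phi)$ factor; one further differentiation gives \rmref{bofour}. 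You differentiate first, so you arrive at $m'''$ as a convolution of $m''$ against $f'$, and then try to trade moments of $f'$ for moments of $f''$ plus $|f'(0)|$. That trade is where the argument breaks: the termwise bound you propose, $\int_0^\infty y^i|f'(y)|dy\le |f'(0)|$ (for $i=0$) plus an $f''$-term, is divergent as written (the constant $|f'(0)|$ integrated over $(0,\infty)$), and the correct representation $f'(y)=-\int_y^\infty f''(z)dz$ (legitimate because $I_0(f'')<\infty$ gives $f'$ a finite limit at infinity, which must vanish since $f$ is integrable) only yields $I_0(f')\le I_1(f'')$, $I_1(f')\le I_2(f'')/2$, $I_2(f')\le I_3(f'')/3$; the third moment $I_3(f'')$ is not among the hypotheses, so $I_2(f')$ is simply not controlled. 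Moreover, even if the $I_i(f')$ were finite, your inequality would have them multiplying $\|u^2m''\|$, $\|um''\|$, $\|m''\|$, whereas \rmref{bothir} has $I_i(f'')$ multiplying the corresponding norms of $m'$ --- a structurally different bound, so the stated estimate would still not follow.

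Two secondary issues. Your alternative sub-route (write $m'''(u)=\phi\int_0^u m'''(u-y)f(y)dy+w_3(u)$ with $w_3=\phi(m''(0)f+m'(0)f')+w_1''$ and solve as in Proposition \ref{corfbo}) also cannot give \rmref{bothir}: it produces the moments $EZ$, $EZ^2$ of $f$ rather than $I_i(f'')$, it needs $\|u^2f(u)\|<\infty$ to control $\|u^2w_3(u)\|$ (not implied by Conditions 1--6), and the self-referential solve requires $\|m'''\|$ and $\|um'''(u)\|$, which have not been bounded at that stage. Also, re-invoking Proposition \ref{proderi} for the shifted equation with free term $w_1$ would require $w_1\in C_0^2([0,\infty))$, i.e.\ finite limits of $f''$ and $v'''$ at the origin, which the hypotheses do not grant; the paper sidesteps this by differentiating \rmref{deri2a} directly via Lemma \ref{letech}, which only needs $f''$ continuous on $(0,\infty)$ and locally integrable.
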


\begin{proof}Note firstly that (\ref{bosec}) is obvious by Condition 2 and Proposition \ref{corfbo} (b).
Secondly, note that Condition 2 allow us to apply Proposition \ref{proderi}.  Our starting point to prove (\ref{bothir})-(\ref{bofour})  will be the expression for $m''$ given in (\ref{deri2f}).  Moreover, as by condition 3 $f'$ is differentiable, the integral appearing in this expression can be rewritten, by means of an integration by parts as
\begin{equation}\int_{0}^{u}m'(u-y)f'(y)dy=\int_{0}^{u}m(u-y)f''(y)dy-m(0)f'(u)+m(u)f'(0)\label{auxddo}\end{equation}
 Inserting (\ref{auxddo}) in (\ref{deri2f}) , we obtain
\begin{equation}m''(u)=\phi\left(\int_{0}^{u}m(u-y)f''(y)dy+m(u)f'(0)+ m'(u)f(0)\right)+v''(u).\label{deri2a}\end{equation}
We differentiate the previous expression (note that by Proposition \ref{proderi} $m\in C_{0}^{2}([0,\infty)$ and, by Conditions 3 and 4, $b=f''$ satisfies conditions of Lemma \ref{letech}).  Thus, we obtain for all $u>0$
\begin{align}m'''(u)&=\phi\left(\int_{0}^{u}m'(u-y)f''(y)dy+m(0)f''(u)+\sum_{i=1}^{2}m^{(i)}(u)f^{(2-i)}(0) \right)+v'''(u)\nonumber\\&=\phi\left(\int_{0}^{u}m'(u-y)f''(y)dy+
\sum_{i=1}^{2}m^{(i)}(u)f^{(2-i)}(0)\right)+w_{1}''(u),\label{deri3f}\end{align}
where $w_{1}$ is defined in (\ref{vstuno}). Thus (\ref{deri3f}) verifies (\ref{renod0}) in Proposition
\ref{prcoun}, with \[m_{2}=m''',\ m_{1}=m',\ f_{1}=f'' \hbox{ and }v_{1}=\phi \sum_{i=1}^{2}m^{(i)}(u)f^{(2-i)}(0)+w_{1}''(u),\] so that we deduce from Proposition \ref{prcoun} (c)
\begin{align*}\|u^{2}m'''(u)\|&\leq\phi \left(I_{0}(f'')\|u^{2}m'(u)\|+2I_{1}(f'')\|um'(u)\|+I_{2}(f'')\|m'\|\right)\\&+
\phi\sum_{i=1}^{2}|f^{(2-i)}(0)|\|u^{2}m^{(i)}(u)\|+\|u^{2}w''_{1}(u)\|\end{align*}and the first inequality in (\ref{bothir}) follows  easily from the previous bound.  To show  (\ref{bofour}) we use Lemma \ref{letech} to differentiate (\ref{deri3f}), thus obtaining
\begin{align}m^{iv}(u)&=\phi\left(\int_{0}^{u}m''(u-y)f''(y)dy+
m'(0)f''(u)+\sum_{i=1}^{2}m^{(i+1)}(u)f^{(2-i)}(0)\right)+w_{1}'''(u)\nonumber \\&=\phi\left(\kern-3pt\int_{0}^{u}\kern-3pt m''(u-y)f''(y)dy+\sum_{i=1}^{2}m^{(i+1)}(u)f^{(2-i)}(0)\right)\kern-3pt+w_{2}''(u),\label{deri4f}\end{align}
where $w_{2}$ is defined in (\ref{vstdos}).  Then, as (\ref{deri4f}) verifies (\ref{renod0}) in Proposition
\ref{prcoun}, with
\[m_{2}=m^{iv},\ m_{1}=m'',\ f_{1}=f''\hbox{ and }v_{1}=\phi\sum_{i=1}^{2}m^{(i+1)}(u)f^{(2-i)}(0)+w_{2}''(u),\]
we obtain from Proposition \ref{prcoun} (c)
 \begin{align*}\|u^{2}m^{iv}(u)\|&\leq\phi \left(I_{0}(f'')\|u^{2}m''(u)\|+2I_{1}(f'')\|um''(u)\|+I_{2}(f'')\|m''\|\right)\\&+
\phi\sum_{i=1}^{2}|f^{(2-i)}(0)|\|u^{2}m^{(i+1)}(u)\|+\|u^{2}w''_{2}(u)\|,\end{align*} thus showing the first inequality in (\ref{bofour})  To show the finiteness of (\ref{bothir}) and (\ref{bofour}), we will prove that
\begin{eqnarray}\hbox{Conditions 1,2 and 5}&\Rightarrow& \|u^{j}w_{i}(u)\|<\infty,\ i=1,2,\ j=0,1,2\nonumber\\&\Rightarrow &\|u^{j}m^{(i)}(u)\|<\infty,\ i=1,2,\ j=0,1,2.\label{finfir}\end{eqnarray}  To show (\ref{finfir}), let $i=1,2$ be fixed. Condition 2 implies that  $w_{i}\in C[0,\infty)$.  Thus, by condition 5 and Lemma \ref{letec2} (a) we have the first implication in (\ref{finfir}). For the second implication we use condition 1 and apply Proposition \ref{corfbo} (a) for $i=1$, whereas for $i=2$ we apply Proposition \ref{corfbo} (b).

Now, note that

\begin{equation}\hbox{Condition 4}\Rightarrow  I_{i}(f'')<\infty,\quad i=1,2,3\label{finsec}\end{equation}
which is immediate by Lemma \ref{letec2} (b).
Thus, using (\ref{finfir}), (\ref{finsec}) and Condition 6 we show the finiteness of (\ref{bothir}). Similarly, the finiteness of the bound in  (\ref{bofour}) follows using (\ref{finfir}), (\ref{finsec}),  (\ref{bothir}) and Condition 6.  This completes the proof of Proposition \ref{prcond}
\end{proof}

As an immediate consequence of Proposition \ref{prcond} and Theorem \ref{tetota} we have the following.
\begin{corollary}Let $m$ be the locally bounded solution to \rmref{renewc} with $f$ the density function of an absolutely continuous nonnegative random variable $Z$ with distribution function $F$.  If conditions 1-6 in Proposition \rtref{prcond} are satisfied then $m\in {\cal D}_{1}$, with $ {\cal D}_{1}$ as defined in \rmref{dfcla2}. Therefore, the approximation $M_{t}^{[2]}m,\ t>0$, as defined in \rmref{dftile}-\rmref{dftilt} verifies \[\|M_{t}^{[2]}m-m\|\leq \frac{1}{8t^{2}}\|m''(u)\|
+\frac{1}{6t^{2}}\|um'''(u)\|+\frac{9}{16t^{2}}\|u^{2}m^{iv}(u)\|<\infty.\]\label{cofina}\end{corollary}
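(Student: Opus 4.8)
The plan is to check directly that the solution $m$ lies in the class ${\cal D}_{1}$ of \rmref{dfcla2}, and then to quote Theorem \ref{tetota} with $g=m$. By definition, $m\in{\cal D}_{1}$ amounts to three assertions: $m\in C^{4}([0,\infty))$, $\|m''(u)\|<\infty$, and $\|u^{2}m^{iv}(u)\|<\infty$. The last two are nothing but the conclusions \rmref{bosec} and \rmref{bofour} of Proposition \ref{prcond}, which hold under Conditions 1--6; so the only point not already proved there is the regularity assertion $m\in C^{4}([0,\infty))$.

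For that I would simply follow the differentiation chain set up inside the proof of Proposition \ref{prcond}. Condition 2 allows Proposition \ref{proderi} to be applied, yielding $m\in C^{2}_{0}([0,\infty))$ and the representation \rmref{deri2a} for $m''$. Conditions 3 and 4 are exactly what is needed so that the kernel $b=f''$ meets the hypotheses of Lemma \ref{letech} (continuity on $(0,\infty)$ together with the local integrability requirement there, the latter being guaranteed by the finiteness of $I_{i}(f'')$ via Lemma \ref{letec2}(b), cf. \rmref{finsec}); hence \rmref{deri2a} may be differentiated to give the formula \rmref{deri3f} for $m'''$, which exhibits $m'''$ as a continuous function on $(0,\infty)$, and a second application of Lemma \ref{letech} to \rmref{deri3f} produces \rmref{deri4f}, showing that $m^{iv}$ exists and is continuous on $(0,\infty)$. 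Combined with the continuity of $m$ on $[0,\infty)$ coming from Proposition \ref{proderi}, this is precisely $m\in C^{4}([0,\infty))$ in the sense fixed at the start of the section.

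Once $m\in{\cal D}_{1}$ has been established, the Remark recording that $\|u g'''(u)\|\le\|u^{2}g^{iv}(u)\|$ for $g\in{\cal D}_{1}$ guarantees $\|u m'''(u)\|<\infty$, so every term on the right-hand side of the asserted inequality is finite; Theorem \ref{tetota} applied to $g=m$ then gives the displayed bound verbatim, which finishes the proof. I expect the only delicate step to be the second one --- confirming that Conditions 3--4 suffice for the two successive uses of Lemma \ref{letech} --- but since this bookkeeping has already been carried out within the proof of Proposition \ref{prcond}, for the corollary it reduces to reading off the stated conclusions, and the result is indeed \emph{immediate}.
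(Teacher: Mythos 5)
Your proposal is correct and follows essentially the same route as the paper, which states the corollary without a separate proof as an immediate consequence of Proposition \ref{prcond} and Theorem \ref{tetota}. Your additional bookkeeping (extracting the $C^{4}$ regularity of $m$ from the differentiation chain \rmref{deri2a}, \rmref{deri3f}, \rmref{deri4f} in the proof of Proposition \ref{prcond}, and using the Remark to get $\|um'''(u)\|<\infty$) simply makes explicit what the paper leaves implicit.
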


\section{Application: Approximations for ruin probabilities}
In this section we will apply the results given in the previous one to ruin probabilities in the classical risk model, which are a well-known example for functions satisfying a defective renewal equation.  First of all we recall how the classical risk model is defined (see \cite[Ch. 4]{kagomo} or \cite[Ch.5.3]{roscst}, for instance).  We consider an insurance company in which insurance claims follow a Poisson process $(N(t), \ t\geq 0)$  with intensity $\lambda>0$. On the other hand, the individual claim
amounts $\seq{X}$ are identically distributed and positive random
variables with finite mean, independent on $(N(t), \ t\geq 0)$ .
Suppose that the initial capital of the insurance company is
$U(0):=u\geq 0$ and it receives premiums at a constant rate $c$. In this setting, the
probability of eventual ruin $\psi(u)$ is the probability that the
wealth of the company is ever negative, i.e.
\begin{equation} \psi(u)=P\left(\inf_{t\geq0}\left(u+ct-\sum_{i=0}^{N(t)}X_{i}\right)<0\right),\quad u\geq 0, \label{ruinpr}\end{equation}
Call $\mu:=EX_{1}$, and assume that $\mu>0$.  The condition for no sure ruin is
 \begin{equation}
\phi:=\frac{\lambda \mu}{c}<1\label{safelo}\end{equation}
We will assume this condition from now on.  Usually the ruin function cannot be evaluated in an explicit way. However, it is well-known that the ruin function satisfies a defective renewal equation (cf. \cite[p. 105]{kagomo}, for instance).  In fact, let $F_{X}$ be the distribution function of the claim amounts, and $\bar{F}_{X}:=1-F_{X}$. We have
\begin{equation}
\psi(u)=\phi\left(\int_{0}^{u}\psi(u-y)\frac{\bar{F}_{X}(y)}{\mu}dy+\int_{u}^{\infty}\frac{\bar{F}_{X}(y)}{\mu}dy\right),\quad u\geq0. \label{renewr}\end{equation}
and therefore (\ref{renewc}) holds true for the ruin probability, with
\begin{equation}F'(u)=f(u)=\frac{\bar{F}_{X}(u)}{\mu}\quad \hbox{and} \quad v(u):=\phi\int_{u}^{\infty}f(y)dy=\phi(1-F(u))\label{efeuve}\end{equation}
Note that $f$ is a well-defined density corresponding to the so-called equilibrium distribution of $X$ (cf. \cite[p. 14]{wililu}).  The rest of the section is divided in two parts.  In the first one we will check that if the claim amounts are mixtures of gamma random variables with shape parameter $\alpha\geq 1$ (and arbitrary scale parameter), then $\Psi\in {\cal D}_{1}$, so that Corollary \ref{cofina} holds true. In the second part, we will give a method to compute $M_{t}^{[2]}\Psi$, paying special attention to mixtures of gamma claim amounts.
\subsection{Conditions for optimal order of convergence in the approximated ruin probability.  Applications to mixtures of gamma claim amounts}
Our first result gives sufficient conditions in order that $\Psi\in {\cal D}_{1}$.  As an immediate consequence we will obtain sufficient conditions for mixtures of gamma claim amounts.
\begin{proposition} Consider the classical risk model, where the claim amounts $X$ have distribution $F_{X}$.  Assume that $F_{X}\in C^{3}[0,\infty)$ and
\begin{enumerate}[a)]
\item $X$ has finite third moment
\item $F'_{X}$ verifies that $\displaystyle \lim_{u\downarrow 0}F_{X}'(u)$ exists and is finite.
\item $I_{i}(F''_{X})=\int_{0}^{\infty}u^{i}|F_{X}''(u)|du<\infty,\ i=0,2$
\item $\|u^{2}\bar{F}_{X}(u)\|<\infty,$ and $\|u^{2}F^{(i)}_{X}(u)\|<\infty,\ i=1,2,3$
\end{enumerate}  Then, the associated functions $F$ and $v$ for the ruin probability $\Psi$, as given in \rmref{efeuve} satisfy conditions 1-6 in Proposition \rtref{prcond}\label{prruge}\end{proposition}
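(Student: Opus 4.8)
The plan is to verify, one at a time, that the hypotheses a)--d) on $F_{X}$ together with the explicit formulas $f(u)=\bar F_{X}(u)/\mu$ and $v(u)=\phi(1-F_{X}(u))$ from \rmref{efeuve} imply each of the six conditions of Proposition \rtref{prcond}. The key preliminary observation is that differentiation of $f$ and $v$ in \rmref{efeuve} trades one derivative of $F_{X}$ for the corresponding derivative of $f$ and $v$: we have $f'(u)=-F_{X}'(u)/\mu$, $f''(u)=-F_{X}''(u)/\mu$, $v'(u)=-\phi F_{X}'(u)$, $v''(u)=-\phi F_{X}''(u)$, $v'''(u)=-\phi F_{X}'''(u)$. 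So smoothness/integrability statements about $F$, $v$ and their derivatives translate directly into the same statements about $F_{X}'$, $F_{X}''$, $F_{X}'''$. I would record these identities at the outset, since every subsequent step is just reading off one of them.

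Next I would check Conditions 1--4 of Proposition \rtref{prcond}. Condition 1 ($Z$ has finite variance, where $Z$ has density $f$ = the equilibrium density of $X$) follows from the standard fact that the $k$-th moment of the equilibrium distribution is $E X^{k+1}/((k+1)\mu)$; hence $E Z^{2}<\infty$ because $X$ has finite third moment (hypothesis a)). Condition 2 ($F,v\in C^{2}_{0}([0,\infty))$): since $F_{X}\in C^{3}$, $f=\bar F_{X}/\mu$ is $C^{3}$ hence $F=\int_{0}^{\cdot}f$ is $C^{3}\subset C^{2}$, and $v=\phi(1-F_{X})$ is $C^{3}\subset C^{2}$; the limits at $0$ of $F,F',F''$ and $v,v',v''$ exist and are finite because these are continuous at $0$ (hypothesis b) is exactly what guarantees $\lim_{u\downarrow0}f(u)=1/\mu$ and $\lim_{u\downarrow0}f'(u)=-\lim_{u\downarrow0}F_{X}'(u)/\mu$ are finite). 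Condition 3 ($f',v''\in C^{2}([0,\infty))$): $f'=-F_{X}'/\mu$ and $v''=-\phi F_{X}''$, both of which are $C^{2}$ on $[0,\infty)$ exactly when $F_{X}\in C^{3}$, i.e. $F_{X}',F_{X}''\in C^{2}$. Wait --- one must be a little careful here, since $C^{3}[0,\infty)$ in the paper's convention means continuous third derivative on $(0,\infty)$ with finite limits; combined with b) this is enough. Condition 4 ($I_{i}(f'')<\infty$, $i=0,2$) is immediate from $f''=-F_{X}''/\mu$ and hypothesis c), together with Lemma \rtref{letec2}(b) to also get $I_{1}(f'')<\infty$.

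It remains to verify Conditions 5 and 6, which concern the auxiliary functions $w_{1}(u)=\phi m(0)f(u)+v'(u)$ and $w_{2}(u)=\phi m'(0)f(u)+w_{1}'(u)$ from \rmref{vstuno}--\rmref{vstdos}, where now $m=\Psi$. Expanding: $w_{1}(u)=\phi\Psi(0)\bar F_{X}(u)/\mu-\phi F_{X}'(u)$, and then $w_{1}'(u)=-\phi\Psi(0)F_{X}'(u)/\mu-\phi F_{X}''(u)$, $w_{1}''(u)=-\phi\Psi(0)F_{X}''(u)/\mu-\phi F_{X}'''(u)$, and similarly $w_{2}(u)=\phi\Psi'(0)\bar F_{X}(u)/\mu+w_{1}'(u)$, $w_{2}''(u)=\phi\Psi'(0)(-F_{X}''(u)/\mu)+w_{1}'''(u)$ --- so $w_{2}''$ involves $F_{X}''$ and $F_{X}'''$ (the term $\phi\Psi'(0)f''$ needs $F_{X}''$, and $w_{1}'''$ needs $F_{X}'''$; note $w_{1}'''$ contains $F_{X}^{(4)}$ unless one rewrites using the renewal structure, but in fact Proposition \rtref{prcond} only asks for $\|u^{2}w_{i}''(u)\|<\infty$, and expanding $w_{1}''=-\phi\Psi(0)F_{X}''/\mu-\phi F_{X}'''$ shows $w_{1}''$ is a linear combination of $F_{X}''$ and $F_{X}'''$ only, while $w_{2}''$ is a linear combination of $F_{X}''$ and $F_{X}'''$ as well, since $w_{2}''=(\phi\Psi'(0)/\mu)(-F_{X}'')+w_{1}'''$ --- here I should double check: does Condition 6 really only need up to $F_{X}'''$? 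Yes: $w_{2}'' = \phi \Psi'(0) f'' + w_1'''$ and $w_1''' = \phi\Psi(0) f''' + v^{(4)}$; but $f'''=-F_X^{(4)}/\mu$ and $v^{(4)}=-\phi F_X^{(4)}$ would require a fourth derivative, so in fact one must be careful: Condition 6 asks $\|u^2 w_i''(u)\|<\infty$ for $i=1,2$, and $w_1''$ only involves $f''=-F_X''/\mu$ and $v'''=-\phi F_X'''$, so $\|u^2 w_1''\|<\infty$ follows from $\|u^2 F_X''\|<\infty$ and $\|u^2 F_X'''\|<\infty$ in hypothesis d). For $w_2''=\phi\Psi'(0)f''+w_1'''$, note $w_1'''=\phi\Psi(0)f'''+v^{(4)}$ does need third derivatives of $f$ — so actually I suspect the cleanest route is to bound $w_2''$ directly using its definition and the fact that $w_2''=\phi\Psi'(0)f''+w_1'''$, and observe $w_1''' = -\phi\Psi(0)F_X'''/\mu - \phi F_X^{(4)}$, which would need $F_X\in C^4$ — hmm, there may be a mild gap or an implicit strengthening here that the author's proof resolves by a different bookkeeping, e.g. by not differentiating $w_1$ a third time but instead using the renewal equation to express the needed quantity. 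I will follow whatever reduction keeps the derivative count at three.) In any case, Condition 5 ($\|u^{2}w_{i}(u)\|<\infty$) follows since $w_{1}$ is a linear combination of $\bar F_{X}$ and $F_{X}'$ and $w_{2}$ of $\bar F_{X}$, $F_{X}'$, $F_{X}''$, and hypothesis d) bounds $\|u^{2}\bar F_{X}(u)\|$ and $\|u^{2}F_{X}^{(i)}(u)\|$ for $i=1,2,3$.

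The main obstacle I anticipate is precisely the careful derivative-counting for Condition 6: one must confirm that $\|u^{2}w_{i}''(u)\|<\infty$ can be established using only hypotheses a)--d) (which cap the controlled derivatives of $F_{X}$ at the third), rather than a fourth derivative of $F_{X}$. This is handled by expanding each $w_{i}''$ into a linear combination of $\bar F_{X}$, $F_{X}'$, $F_{X}''$, $F_{X}'''$ using the explicit formulas, being careful to substitute the renewal-equation expressions for $\Psi(0)=v(0)=\phi$ and $\Psi'(0)$ (both finite constants by \rmref{deri10}) rather than differentiating blindly; the weighted bounds then follow term by term from d), with Lemma \rtref{letec2}(a) upgrading $\|u^{2}\cdot\|<\infty$ to $\|\cdot\|<\infty$ and $\|u\cdot\|<\infty$ wherever the lower-weight norms are also needed. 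Everything else is a direct substitution.
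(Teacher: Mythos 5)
There is a genuine gap, and it stems from a misreading of \rmref{efeuve}: you take $v(u)=\phi(1-F_{X}(u))$, but the paper defines $v(u)=\phi\int_{u}^{\infty}f(y)\,dy=\phi(1-F(u))$, where $F$ is the \emph{equilibrium} distribution, i.e.\ $F'=f=\bar F_{X}/\mu$. Consequently $v'(u)=-\phi\,\bar F_{X}(u)/\mu$ and $v''(u)=\phi\,F_{X}'(u)/\mu$, one order of differentiation in $F_{X}$ lower than the formulas $v'=-\phi F_{X}'$, $v''=-\phi F_{X}''$, $v'''=-\phi F_{X}'''$ that you use throughout. This misidentification invalidates two of your verifications. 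First, Condition 3: you claim $v''=-\phi F_{X}''\in C^{2}([0,\infty))$ ``exactly when $F_{X}\in C^{3}$'', which is false (it would need $F_{X}\in C^{4}$); with the correct $v$ one has $v''=\phi F_{X}'/\mu$, which is $C^{2}$ precisely because $F_{X}\in C^{3}[0,\infty)$. Second, and more seriously, Condition 6: the difficulty you flag --- that $w_{2}''$ appears to involve $F_{X}^{(4)}$ --- is entirely an artifact of the wrong $v$, and you do not resolve it; you only say you will ``follow whatever reduction keeps the derivative count at three''. As written, the proposal does not establish Condition 6, so the proof is incomplete.

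The paper's resolution is exactly the correct bookkeeping you were hoping exists. Since $F(0)=0$, one has $m(0)=v(0)=\phi$, and hence $w_{1}(u)=\phi v(0)f(u)+v'(u)=-\phi(1-\phi)\bar F_{X}(u)/\mu$, a multiple of $\bar F_{X}$ alone; moreover $\psi'(0)=\phi f(0)v(0)+v'(0)=-\phi(1-\phi)\bar F_{X}(0)/\mu$, which gives $w_{2}(u)=\frac{\phi\bar F_{X}(0)}{\mu}\,w_{1}(u)+\phi(1-\phi)\frac{F_{X}'(u)}{\mu}$. Therefore $w_{1}''$ and $w_{2}''$ are linear combinations of $F_{X}''$ and $F_{X}'''$ only, and Condition 6 follows term by term from hypothesis d), with no fourth derivative of $F_{X}$ ever appearing. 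Your treatment of Conditions 1, 4, and the general strategy (checking the six conditions one at a time from the explicit formulas, with $EZ^{2}=EX^{3}/(3\mu)$ for Condition 1 and $f''=-F_{X}''/\mu$ for Condition 4) coincides with the paper's; once the formulas for $v'$, $v''$ are corrected, Conditions 2 and 5 also go through as you outline.
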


\begin{proof} We will consider the renewal equation for the ruin function as given in (\ref{renewr}) and (\ref{efeuve}) and check all the conditions in Proposition \ref{prcond}.

\noindent {\it Condition 1}.  Let $Z$ be the random variable whose distribution is $F$, as given in (\ref{efeuve}). To show that $Z$ has finite variance, we note that an integration by parts shows us that $EZ^{2}=EX^{3}/(3EX)$ (cf \cite[p.15]{wililu}) so, that if a) is satisfied, Condition 1 holds true.

To check the rest of conditions, taking into account (\ref{efeuve}), the functions for the renewal equation and their respective derivatives are
\begin{align} F'(u)&=f(u)=\frac{\bar{F}_{X}(u)}{\mu},\quad \hbox{and}\quad F''(u)=f'(u)=\frac{-F_{X}'(u)}{\mu}\nonumber\\ v(u)&=\phi(1- F(u)),\quad v'(u)=\phi\frac{-\bar{F}_{X}(u)}{\mu}\quad \hbox{and}\quad v''(u)=\phi \frac{F_{X}'(u)}{\mu}\label{sedeFv}\end{align}

\noindent {\it Condition  2}. Taking into account (\ref{sedeFv}) it is immediate that if b) holds true, then $F$ and $v$ are in $C^{2}_{0}([0,\infty))$.

\noindent {\it Condition  3}. Taking into account (\ref{sedeFv}), both $F''$ and $v''$ are in $C^{2}[0,\infty)$, as $F_{X}'\in C^{2}[0,\infty)$.

\noindent {\it Condition  4}. We need to chack that $I_{i}(f'')<\infty$, $i=0,2$.  This follows from c), as $f''=-F_{X}''/\mu$.

To show Conditions 5-6 we recall (\ref{vstuno}) and use (\ref{sedeFv}) and (\ref{deri10}) to write
\begin{equation}w_{1}(u)=\phi v(0)f(u)+v'(u)=-\phi(1-\phi) \frac{\bar{F}_{X}(u)}{\mu}\label{vstung}\end{equation}
Now we recall (\ref{vstdos}) and use (\ref{sedeFv}) and (\ref{vstung}) to write
\begin{equation}w_{2}(u)=\phi \psi'(0)f(u)+w_{1}'(u)=\phi \psi'(0)\frac{\bar{F}_{X}(u)}{\mu}+\phi(1-\phi)\frac{F_{X}'(u)}{\mu} \label{vstdo1}\end{equation}
Recalling (\ref{deri10}) and using (\ref{sedeFv}) we have that
\[\psi'(0)=\phi f(0) v(0)+v'(0)=-\frac{\phi (1-\phi)\bar{F}_{X}(0)}{\mu}\]  and using the previous equality, (\ref{vstdo1}) and (\ref{vstung}) we can write
\begin{equation}w_{2}(u)=\frac{\phi \bar{F}_{X}(0)}{\mu} w_{1}(u)+\phi(1-\phi) \frac{F_{X}'(u)}{\mu}.\label{vstdog}\end{equation}
{\it Condition  5}. We need to show that  $\|u^{2}w_{i}(u)\|<\infty, i=1,2$. Taking into account d), we have that $\|u^{2}\bar{F}_{X}(u)\|<\infty,$ which implies, recalling (\ref{vstung}), that $\|u^{2}w_{1}(u)\|<\infty$.  Using the previous bound, (\ref{vstdog}) and the fact that, by d), $\|u^{2}F_{X}'(u)\|<\infty$, we have that $\|u^{2}w_{2}(u)\|<\infty$

\noindent {\it Condition  6}. We need to show that  $\|u^{2}w''_{i}(u)\|<\infty, \ i=1,2$. This follows easily by (\ref{vstung}), (\ref{vstdog}) and d), as
\begin{align*}\|u^{2}w_{1}''(u)\|&=\frac{\phi(1-\phi)}{\mu}\| u^2F_{X}''(u)\|<\infty \\
\|u^{2}w_{2}''(u)\|&\leq \frac{\phi \bar{F}_{X}(0)}{\mu}\|u^{2}w_{1}''(u)\|+\frac{\phi(1-\phi)}{\mu}\|u^{2}F_{X}'''(u)\|<\infty
\end{align*}
This completes the proof of Proposition \ref{prcond}\end{proof}
\begin{corollary} Assume that, in the classical risk model, the claim amounts are mixtures
 of gamma random variables $\Gamma(\alpha_{i},\beta_{i})$, with mixing weights $\seqn{p}$, that is
\begin{equation}F_{X}(u)=\sum_{i=1}^{n}p_{i}F_{\alpha_{i},\beta_{i}}(u),\label{gammix}\end{equation}
where $F_{\alpha_{i},\beta_{i}}$ are distributions having density  $f_{\alpha_{i},\beta_{i}}$ as defined in \rmref{gamden}, $p_{i}> 0$ and $p_{1}+\dots+p_{n}=1$.  Assume that
$\alpha_{i}\geq 1, \ i=1,\dots, n$.  Then, the ruin probability $\psi$ satisfies that $\psi\in {\cal D}_{1}$, with $ {\cal D}_{1}$ as defined in \rmref{dfcla2}. Therefore, the approximation $M_{t}^{[2]}\psi, \ t>0$, as defined in \rmref{dftile}-\rmref{dftilt} has uniform order of convergence $1/t^{2}$, that is
  \[\|M_{t}^{[2]}\psi-\psi\|\leq \frac{1}{8t^{2}}\|\psi''(u)\|
+\frac{1}{6t^{2}}\|u\psi'''(u)\|+\frac{9}{16t^{2}}\|u^{2}\psi^{iv}(u)\|<\infty.\] \label{coruge}\end{corollary}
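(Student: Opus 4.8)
The plan is to obtain the corollary directly from Proposition \ref{prruge} and Corollary \ref{cofina}: it suffices to check that a finite mixture $F_{X} = \sum_{i=1}^{n} p_{i} F_{\alpha_{i},\beta_{i}}$ with every shape parameter $\alpha_{i} \geq 1$ meets the hypotheses of Proposition \ref{prruge}. That proposition then yields Conditions 1--6 of Proposition \ref{prcond}, and Corollary \ref{cofina} converts these into the two assertions we want, namely $\psi \in {\cal D}_{1}$ and the displayed $1/t^{2}$ error bound.

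Since $F_{X}$, its derivatives $F_{X}^{(j)} = \sum_{i} p_{i} f_{\alpha_{i},\beta_{i}}^{(j-1)}$, the tail $\bar F_{X} = \sum_{i} p_{i}\bar F_{\alpha_{i},\beta_{i}}$, and the integrals $I_{k}(F_{X}'')$ are all controlled (by the triangle inequality, using $p_{i} > 0$ and $\sum_{i} p_{i} = 1$) by the corresponding sums over the $n$ components, it is enough to verify each requirement for a single $\Gamma(\alpha,\beta)$ with $\alpha \geq 1$ and then add up. For such a law, $f_{\alpha,\beta}$ and all its derivatives are $C^{\infty}$ on $(0,\infty)$, so $F_{\alpha,\beta} \in C^{3}([0,\infty))$ in the paper's sense (continuity on $[0,\infty)$ together with a continuous third derivative on $(0,\infty)$); moreover $\lim_{u\downarrow 0}F_{\alpha,\beta}'(u) = f_{\alpha,\beta}(0^{+})$ equals $\beta$ when $\alpha = 1$ and $0$ when $\alpha > 1$, hence is finite, which is hypothesis b) and the first point at which $\alpha \geq 1$ enters. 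Hypothesis a) is immediate, gamma laws having moments of every order.

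For hypotheses c) and d) the only delicate region will be a neighbourhood of the origin, since the factor $e^{-\beta u}$ makes every polynomially weighted expression bounded and integrable at infinity, and every such expression is continuous, hence bounded, on compact subsets of $(0,\infty)$. Differentiating $f_{\alpha,\beta}$ a total of $j$ times produces a finite linear combination of terms $u^{\alpha-1-j+\ell}e^{-\beta u}$ with $0 \leq \ell \leq j$, so $f_{\alpha,\beta}^{(j)}(u) = O(u^{\alpha-1-j})$ as $u \downarrow 0$. Thus $u^{2}F_{\alpha,\beta}^{(i)}(u) = u^{2}f_{\alpha,\beta}^{(i-1)}(u) = O(u^{\alpha+2-i})$ stays bounded near $0$ for $i = 1,2,3$ when $\alpha \geq 1$ (the constraint biting only at $i = 3$), while $u^{2}\bar F_{\alpha,\beta}(u)$ is bounded by $1$ on $[0,1]$ and vanishes at infinity; this gives hypothesis d). For hypothesis c), $F_{\alpha,\beta}'' = f_{\alpha,\beta}' = O(u^{\alpha-2})$ near $0$, so $\int_{0}^{1}u^{k}|f_{\alpha,\beta}'(u)|\,du$ is finite for $k = 0$ (finite when $\alpha > 1$, and for $\alpha = 1$ the coefficient of the $u^{\alpha-2}$ term vanishes, so there is no singularity) and a fortiori for $k = 2$; the tails contribute finitely by exponential decay. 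Summing over the components gives all hypotheses of Proposition \ref{prruge} for $F_{X}$, and the corollary follows.

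The argument is computational rather than conceptual; the one place requiring genuine care — and the sole place where the restriction $\alpha_{i} \geq 1$ is actually needed — is the local behaviour at $u = 0$ of $f_{\alpha,\beta}$ and its first two derivatives: that $f_{\alpha,\beta}(0^{+})$ be finite (for b), that the integral $\int_{0}^{1}u^{\alpha-2}\,du$ converge, the case $\alpha=1$ being degenerate (for c), and that $u^{2}f_{\alpha,\beta}''(u)$ remain bounded near the origin (for d). Everything else reduces to exponential tail decay and linearity of the mixture.
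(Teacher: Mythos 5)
Your proposal is correct and follows essentially the same route as the paper: reduce the mixture to a single gamma component with $\alpha\geq 1$, verify hypotheses a)--d) of Proposition \ref{prruge} from the explicit behaviour of $f_{\alpha,\beta}$ and its first two derivatives (the only delicate point being the origin, where $\alpha\geq 1$ enters), and conclude via Corollary \ref{cofina}. The only cosmetic difference is that the paper first rescales to $\beta=1$ and writes out $F_\alpha''$, $F_\alpha'''$ explicitly, whereas you argue with $O(u^{\alpha-1-j})$ asymptotics and the vanishing coefficient at $\alpha=1$, which is equivalent.
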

\begin{proof} To prove the result we will show that $F_{X}$ satisfies the conditions of Proposition \ref{prruge}. Condition a) is obviously satified.  To check the rest of conditions, we will use the following simplification.  Denote by $F_{\alpha}:=F_{\alpha,1}$, a gamma distribution  $\Gamma(\alpha,\beta=1)$. Recall that a gamma distribution $\Gamma(\alpha,\beta)$, has distribution function $F_{\alpha,\beta}(u)=F_{\alpha}(\beta u)$. Therefore, we can write (\ref{gammix}) as
\[F_{X}(u)=\sum_{i=1}^{n}p_{i}F_{\alpha_{i}}(\beta_{i}u),\quad\hbox{and}\quad \bar{F}_{X}(u)=\sum_{i=1}^{n}p_{i}\bar{F}_{\alpha_{i}}(\beta_{i}u) \] and it is clear that if $F_{\alpha},\ \alpha\geq 1$ satisfies conditions b)-d) in Proposition \ref{prruge}, then $F_{X}$ will also do. Then, we will check conditions b)-d) for $F_{\alpha}$, with $\alpha\geq 1$.  Note firstly that  $F_{\alpha}$ is infinitely differentiable on $(0,\infty)$.  In particular, its density
\begin{equation} F_{\alpha}'(u)=\frac{1}{\Gamma(\alpha)}e^{-u}u^{\alpha-1},\quad u\geq 0.\label{gamde1}\end{equation}
satisfies condition b) in Proposition \ref{prruge}, whenever $\alpha\geq 1$. For the rest of conditions, note that
 \begin{align}F_{\alpha}''(u)&=\frac{1}{\Gamma(\alpha)}e^{-u}u^{\alpha-2}(\alpha-1-u),\quad u>0,\label{gamdem}\\F_{\alpha}'''(u)&=\frac{1}{\Gamma(\alpha)}e^{-u}u^{\alpha-3}((\alpha-1)(\alpha-2)-2(\alpha-1)u+u^{2}),\quad
u>0.\label{gamde2}\end{align}
Observe that the previous equalities follow by differentiation in (\ref{gamde1}) for $\alpha> 1$, and are still valid for $\alpha=1$.
To show condition c) in Proposition \ref{prruge}, note that
\begin{align*}I_{i}(F_{\alpha}'')&=\int_{0}^{\infty}u^{i}|F_{\alpha}''|(u)du\leq \frac{(\alpha-1)}{\Gamma(\alpha)}\int_{0}^{\infty}u^{i}e^{-u}u^{\alpha-2}+\frac{1}{\Gamma(\alpha)}
\int_{0}^{\infty}u^{i}e^{-u}u^{\alpha-1}\\&=\frac{(\alpha-1)\Gamma(\alpha-1+i)}{\Gamma(\alpha)}+\frac{\Gamma(\alpha+i)}{\Gamma(\alpha)}<\infty ,\quad i=0,2, \quad \alpha\geq1.\end{align*}
To show Condition d), note that it is clear from (\ref{gamde1})-(\ref{gamde2}) that
\begin{equation}\lim_{u\rightarrow \infty}u^{2}F_{\alpha}^{(i)}(u)=0,\quad i=1,2,3\label{liminf}\end{equation}
and using L'Hopital'rule applied to $u^{2}\bar{F}_{\alpha}(u)$, we can write
\begin{equation}\lim_{u\rightarrow \infty}u^{2}\bar{F}_{\alpha}(u)=\lim_{u\rightarrow \infty}2 u^{3}F_{\alpha}'(u)=0\label{limin0}\end{equation}
From (\ref{liminf}), (\ref{limin0})  and the fact that $\bar{F}_{\alpha}$ and $F_{\alpha}'$ are continuous and bounded at the origin we obtain  that $\|u^{2}\bar{F}_{\alpha}(u)\|<\infty$ and $\|u^{2}F_{\alpha}'(u)\|<\infty$.  To show that  $\|u^{2}F_{\alpha}^{(i)}(u)\|<\infty,\ i=2,3$, we take into account (\ref{liminf}) and note that
\begin{align*}\lim_{u\rightarrow 0}u^2F''_{\alpha}(u)&=0,\quad \alpha\geq 1,\\\lim_{u\rightarrow 0}u^2F'''_{\alpha}(u), &=\frac{(\alpha-1)(\alpha-2)}{\Gamma(\alpha)}\lim_{u\rightarrow 0}u^{\alpha-1}=0, \quad \alpha\geq 1.\end{align*}
Note that the last equality follows as, for $\alpha=1$, $\alpha-1=0$, whereas for $\alpha>1$, $\lim_{u\rightarrow 0}u^{\alpha-1}=0$.
Thus conditions a)-d) in Proposition \ref{prcond} are verified, and the conclusion follows by Corollary \ref{cofina} \end{proof}

\subsection{Numerical computation of the approximated ruin probability.  Applications to mixtures of gamma claim amounts }
 In this Section we will give a method to compute the approximated ruin probability, using that non-ruin probability is the distribution function of a geometric sum.  Our approach is based on the following representation of $L_{t}^{*}g$, as defined in (\ref{dflapt}), when $g:=F_{X}$ is the distribution function of a nonnegative random variable $X$. In this case, the approximation $L_{t}^{*}F_{X}$ can be rewritten in the following terms (cf. \cite{adcaap}).  Let
$\phi_{X}(\cdotp)$ be the Laplace-Stieltjes transform of $X$, that
is
\[\Phi_{X}(t):=Ee^{-tX}=\int_{[0,\infty)}e^{-t u}dF_{X}(u),\quad t>0.\]
We define a random
variable $\discre{X}{t}$ taking values on $k/t,\quad k\in \Nn$, and
such that
\begin{equation}P(\discre{X}{t}=k/t)=\frac{(-t)^{k}}{k!}\Phi_{X}^{(k)}(t),\quad k\in
\Nn,\label{discre}\end{equation}
Let $(S(t),\ t\geq 0)$ be a collection of gamma random variables $\Gamma(t,1)$, as considered in the Introduction.  The following equality holds true (see \cite{adcaap}),
\begin{equation}P(\discre{X}{t}\leq
u)=\sum_{k=0}^{[tu]}\frac{(-t)^{k}}{k!}\phi^{(k)}_{X}(t)=EF_{X}\left(\frac{S([tu]+1)}{t}\right)
,\quad u\geq
0,\label{dfdfdi}\end{equation}
 As mentioned in the Introduction, the first equality is the so-called Widder's formula in \cite{matenu}. Let $L_{t}^{*}F_{X}$ be the approximation defined in (\ref{delets}).  Recalling (\ref{repgam}) we see therefore that
\begin{equation}L_{t}^{*}F_{X}(u)=\frac{(-t)^{[tu]+1} }{ \Gamma([tu]+1)}\widetilde{F_{X}}^{([tu])}(t)=P(\discre{X}{t}\leq
u)\label{discret}\end{equation}
Thus, $L_{t}^{*}F$ can be obtained either by straightforward differentiation using the first equality or by computing the probability mass function of $\discre{X}{t}$ (second equality).  Now, we will see the computational advantages of using the second method to approximate $\psi$, the ruin probability in the classical risk model, as defined in Section 3.1. To this end, denote by $\bar{\psi}$ the non-ruin probability, that is $\bar{\psi}=1-\psi$. It is well-known (cf. \cite[p.104]{kagomo}, for instance) that
\begin{equation}\bar{\psi}(u)=P(\sum_{i=1}^{M}L_{i}\leq u)\label{ruigeo}\end{equation}
in which $M$ is a geometric random variable, with probability of 'succes' $p=1-\phi$, where $\phi$ is as in  (\ref{safelo}).  That is, $P(M=n)=p(1-p)^{n}=(1-\phi)\phi^{n},\quad n=0,1,\dots$, and $\seq{L}$ is a sequence of i.i.d. random variables, having density $f$ as defined in (\ref{efeuve}), and independent of $M$. Applying (\ref{discret}) to non-ruin probability and taking into account that $\displaystyle \discre{\left(\sum_{i=1}^{M}L_{i}\right)}{t}$ has the same distribution as $\sum_{i=1}^{M}\discre{L_{i}}{t}$ (see \cite[Proposition 2.1.]{saerro}) we can write
\begin{equation}L_{t}^{*}\bar{\psi}(u)=P\left(\discre{\left(\sum_{i=1}^{M}L_{i}\right)}{t}\leq
u\right)=P\left(\sum_{i=1}^{M}\discre{L_{i}}{t}\leq
u\right),\quad u\geq 0\label{discrer}\end{equation}and using the accelerated approximation $M_{t}^{[2]}\bar{\psi}$ as defined in (\ref{dftile}) we can write
\[M_{t}^{[2]}\bar{\psi}\left(\frac{k}{t}\right)=2P\left(\sum_{i=1}^{M}\discre{L_{i}}{2t}\leq \frac{2k-1}{2t}\right)-P\left(\sum_{i=1}^{M}\discre{L_{i}}{t}\leq \frac{k-1}{t}\right),\ k=1,2,\dots \]
Therefore, we can approximate the ruin probability by evaluating the distribution function of a discrete compound geometric distribution.  This allows us to use well-known evaluating techniques for compound discrete distributions (Panjer's recursion, for instance, see \cite[p.50]{kagomo})).  Approximations for ruin probabilities by means of the discretization of the summands in (\ref{ruigeo}) have been proposed in the literature (cf. \cite[p.110]{kagomo}).  Perhaps the most natural way to discretize a random variable is to round it from below or from above.  However, rounding methods are difficult to apply when the distribution function of a random variable cannot be given in an explicit way (consider a gamma random variable with a shape parameter being not a natural number, or its equilibrium distribution, for instance).  The computational advantage of our method is that we can evaluate the probability mass function of $\discre{L_{i}}{t}$, whenever the Laplace-Stieltjes transform of the claim amounts is known. The expression for the discretized record lows and their behaviour when dealing with mixtures, are collected in the following.

\begin{proposition} Consider a non-negative random variable $X$ with distribution function $F_{X}$ and Laplace-Stieltjes transform $\Phi_{X}$. Assume that $X$ has finite mean $\mu$.  Let $L$ be a random variable having the equilibrium distribution of $X$, that is whose density is given as
\begin{equation}f_{L}(u)=\frac{\bar{F}_{X}(u)}{\mu},\quad u\geq 0\label{efeuv2}\end{equation}
 We have the following.
 \begin{enumerate}[(a)]
 \item Let $\discre{L}{t}$ the discretization given in \rmref{discre}.  We have
 \[\displaystyle P(\discre{L}{t}=\frac{k}{t})=\frac{1}{t\mu}\left(1-\sum_{j=0}^{k}\frac{(-1)^{j}t^{j}}{j!}\Phi_{X}^{(j)}(t)\right)\]
 \item Assume that $F_{X}$ is a mixure of random variables
\[F_{X}=p_{1}F_{1}+\dots +p_{n}F_{n}\]
where the mixing distribution functions  $\seqn{F}$, have finite mean $\seqn{\mu}$ and $\seqn{p}$ are the mixing probabilities.

Let $\seqn{L}$ be random variables having the equilibrium distribution of $\seqn{F}$. Then $\discre{L}{t}$ as given in \rmref{discre} verifies
\begin{equation}P(\discre{L}{t}=\frac{k}{t})=\frac{p_{1}\mu_{1}}{\mu}P(\discre{L_{1}}{t}=\frac{k}{t})+\dots+\frac{p_{n}\mu_{n}}{\mu}P(\discre{L_{n}}{t}=\frac{k}{t})\label{apeqmi}
\end{equation}
\end{enumerate}\label{preval}\end{proposition}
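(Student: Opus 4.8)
The plan is to prove part (a) by a direct manipulation of Laplace transforms and to obtain part (b) for free from the mixture structure of the equilibrium distribution.

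For (a), the starting point is the classical identity linking the Laplace--Stieltjes transform of the equilibrium distribution to that of $X$: since by \rmref{efeuv2} the density of $L$ is $\bar F_X(u)/\mu$, Tonelli's theorem gives
\[\Phi_L(t)=\int_0^\infty e^{-tu}\,\frac{\bar F_X(u)}{\mu}\,du=\frac{1}{\mu}\int_{(0,\infty)}\int_0^y e^{-tu}\,du\,dF_X(y)=\frac{1-\Phi_X(t)}{t\mu}.\]
From here I would write $t\mu\Phi_L(t)=1-\Phi_X(t)$ and differentiate $k$ times; Leibniz's rule on the left-hand side yields, for $k\ge1$, the recursion $\mu\bigl(t\Phi_L^{(k)}(t)+k\Phi_L^{(k-1)}(t)\bigr)=-\Phi_X^{(k)}(t)$. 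Multiplying this through by $(-t)^k/k!$ and recalling the definition \rmref{discre} of the discretized point masses, the recursion collapses to
\[P\!\left(\discre{L}{t}=\tfrac{k}{t}\right)-P\!\left(\discre{L}{t}=\tfrac{k-1}{t}\right)=-\frac{1}{t\mu}\,P\!\left(\discre{X}{t}=\tfrac{k}{t}\right),\qquad k\ge1,\]
while the case $k=0$ reads $P(\discre{L}{t}=0)=\Phi_L(t)=(1-\Phi_X(t))/(t\mu)$. Telescoping these identities, together with $\sum_{j=0}^k P(\discre{X}{t}=j/t)=\sum_{j=0}^k\frac{(-1)^jt^j}{j!}\Phi_X^{(j)}(t)$ from \rmref{discre}, produces exactly the stated formula $P(\discre{L}{t}=k/t)=\frac{1}{t\mu}\bigl(1-\sum_{j=0}^{k}\frac{(-1)^jt^j}{j!}\Phi_X^{(j)}(t)\bigr)$ (equivalently, $P(\discre{L}{t}=k/t)=(1-P(\discre{X}{t}\le k/t))/(t\mu)$, a discrete analogue of $f_L=\bar F_X/\mu$). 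An alternative is to expand $\Phi_L(t)=1/(t\mu)-\Phi_X(t)/(t\mu)$ and differentiate each term with the Leibniz rule, the factors $(-t)^k/k!$ cancelling the resulting factorials; this reaches the same closed form.

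For (b), I would note that $\bar F_X=\sum_{i=1}^n p_i\bar F_i$ and, integrating over $[0,\infty)$, $\mu=\sum_{i=1}^n p_i\mu_i$, so the equilibrium density decomposes as
\[f_L(u)=\frac{\bar F_X(u)}{\mu}=\sum_{i=1}^n\frac{p_i\mu_i}{\mu}\cdot\frac{\bar F_i(u)}{\mu_i}=\sum_{i=1}^n q_i\,f_{L_i}(u),\qquad q_i:=\frac{p_i\mu_i}{\mu},\]
where the weights $q_i$ are nonnegative and sum to one. Hence $L$ is itself the mixture $\sum_i q_iL_i$, so $\Phi_L=\sum_i q_i\Phi_{L_i}$; differentiating $k$ times and applying \rmref{discre} termwise gives $P(\discre{L}{t}=k/t)=\sum_{i=1}^n q_iP(\discre{L_{i}}{t}=k/t)$, which is \rmref{apeqmi}. (Equivalently, one can apply part (a) to $L$ and to each $L_i$ and use the linearity $\Phi_X=\sum_i p_i\Phi_{F_i}$ of the transform in the mixture.)

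I do not expect a genuine obstacle: the content is the computation in part (a), and the only points requiring care are the sign/factorial bookkeeping when multiplying the Leibniz recursion by $(-t)^k/k!$, the justification of differentiation under the integral sign for $\Phi_L$ (immediate since $\bar F_X$ is bounded and $t>0$), and checking that the $k=0$ term is consistent with the closed form after telescoping.
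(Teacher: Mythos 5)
Your proposal is correct and follows essentially the same route as the paper: both rest on the identity $\Phi_L(t)=(1-\Phi_X(t))/(t\mu)$ (which the paper gets by integration by parts and you by Tonelli), then differentiate $k$ times and invoke the definition \rmref{discre}, with part (b) handled identically via the decomposition $\bar F_X/\mu=\sum_i (p_i\mu_i/\mu)\,\bar F_i/\mu_i$. The only difference is cosmetic: the paper applies Leibniz directly to $\frac{1}{t\mu}(1-\Phi_X(t))$ to get a closed form for $\Phi_L^{(k)}$, whereas you differentiate $t\mu\Phi_L(t)=1-\Phi_X(t)$ to obtain a two-term recursion and telescope --- a variant you yourself note is equivalent to the paper's direct computation.
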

\begin{proof} To show (a), we use an integration by parts to write
\[\Phi_{L}(t)=\int_{[0,\infty)}e^{-t u}\frac{\bar{F}_{X}(u)}{\mu}du=\frac{1}{t\mu}\left(1-\Phi_{X}(t)\right)\]
and therefore, applying Leibnitz's differentiation rule, we can write
\[\Phi_{L}^{(k)}(t)=\frac{1}{\mu}\left(\frac{(-1)^{k}k!}{t^{k+1}}-\sum_{j=0}^{k}{k\choose j}\frac{(-1)^{k-j}(k-j)!}{t^{k-j+1}}\Phi_{X}^{(j)}(t)\right)
\]
Thus, using (\ref{discre}), we can write
\[P(\discre{L}{t}=\frac{k}{t})=\frac{(-t)^{k}}{k!}\Phi_{L}^{(k)}(t)=\frac{1}{t\mu}\left(1-\sum_{j=0}^{k} \frac{(-1)^{j}t^{j}}{j! }\Phi_{X}^{(j)}(t)\right)\]
thus proving (a).
To show (b), note that we can write
\[\frac{\bar{F}_{X}}{\mu}=\frac{p_{1}\mu_{1}}{\mu}\frac{\bar{F}_{1}}{\mu_{1}}+\dots +\frac{p_{1}\mu_{1}}{\mu}\frac{\bar{F}_{n}}{\mu_{n}}\]
and, taking into account the previous expression we have
\[\Phi_{L}(t)=\int_{[0,\infty)}e^{-t u}\frac{\bar{F}_{X}(u)}{\mu}du=\frac{p_{1}\mu_{1}}{\mu}\Phi_{L_{1}}(t)+\dots +\frac{p_{n}\mu_{n}}{\mu}\Phi_{L_{n}}(t)\]
Thus,
\begin{eqnarray*}P(\discre{L}{t}=\frac{k}{t})&=&\frac{(-t)^{k}}{k!}\Phi_{L}^{(k)}(t)=\frac{(-t)^{k}}{k!}\left(\frac{p_{1}\mu_{1}}{\mu}\Phi_{L_{1}}^{(k)}(t)+\dots +\frac{p_{n}\mu_{n}}{\mu}\Phi_{L_{n}}^{(k)}(t)\right)\\&=&\frac{p_{1}\mu_{1}}{\mu}
P(\discre{L_{1}}{t}=\frac{k}{t})+\dots+\frac{p_{n}\mu_{n}}{\mu}P(\discre{L_{n}}{t}=\frac{k}{t})\end{eqnarray*}
thus showing part (b)\end{proof}

As an immediate application of the previous result, to mixtures of gamma random variables, we have the following.
\begin{corollary}  Consider $X$ a nonnegative random variable, let $L$ be its equilibrium distribution as given in \rmref{efeuv2} and consider its discretization $\discre{L}{t}$ as given in \rmref{discre}.  We have the following
\begin{enumerate}[(a)]
\item Assume that $X$ is a gamma random variable $\Gamma(\alpha,\beta)$, that is, having density as given in \rmref{gamden}.  Consider the cumulative distribution of a negative binomial random variable with $\alpha>0$ 'successes' and probability of 'success' $\rho$, that is
\[CDF.NB(k;\alpha,\rho)=\sum_{j=0}^{k}{\alpha+j-1\choose j}(1-\rho)^j \rho^{\alpha},\quad k=0,1,\dots\]
Then,
\begin{equation}P(\discre{L}{t}=\frac{k}{t})=\frac{\beta}{t\alpha}\left(1-CDF.NB\left(k;\alpha,\rho=\frac{\beta}{t+\beta}\right)\right)\end{equation}

\item Assume that $X$ is a mixture of $n$ gamma random variables, with mixing weights $\seqn{p}$, that is
\[F_{X}=p_{1}F_{1}+\dots +p_{n}F_{n}\]
in which each $F_{i}$ has distribution $\Gamma(\alpha_{i},\beta_{i})$. Then,
\begin{equation}P(\discre{L}{t}=\frac{k}{t})=
\frac{\displaystyle \sum_{i=1}^{n}p_{i}\left(1-CDF.NB\left(k;\alpha_{i},\frac{\beta_{i}}{t+\beta_{i}}\right)\right)}{\displaystyle t\left(\frac{p_{1}\alpha_{1}}{\beta_{1}}+\dots+\frac{p_{n}\alpha_{n}}{\beta_{n}}\right)}\label{apeqm1}\end{equation}
\end{enumerate}\label{corlmg}
\end{corollary}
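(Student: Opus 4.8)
The plan is to derive both formulas from Proposition \ref{preval} by specializing to the gamma case, where the Laplace-Stieltjes transform and its derivatives have a closed form that matches the negative binomial distribution. For part (a), I would start from the representation in Proposition \ref{preval}(a),
\[P\Bigl(\discre{L}{t}=\frac{k}{t}\Bigr)=\frac{1}{t\mu}\Bigl(1-\sum_{j=0}^{k}\frac{(-1)^{j}t^{j}}{j!}\Phi_{X}^{(j)}(t)\Bigr),\]
and compute the right-hand side when $X\sim\Gamma(\alpha,\beta)$. Here $\mu=EX=\alpha/\beta$, and the Laplace-Stieltjes transform is $\Phi_{X}(t)=(\beta/(t+\beta))^{\alpha}$. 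The main calculation is to identify $\sum_{j=0}^{k}\frac{(-1)^{j}t^{j}}{j!}\Phi_{X}^{(j)}(t)$ with $CDF.NB(k;\alpha,\rho)$ for $\rho=\beta/(t+\beta)$.

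The key step is the observation that $\frac{(-t)^{j}}{j!}\Phi_{X}^{(j)}(t)$ is precisely the probability mass at $j/t$ of the discretization $\discre{X}{t}$ defined in \rmref{discre}, and for a gamma random variable this discretized mass is a negative binomial mass function. Concretely, differentiating $\Phi_{X}(t)=\beta^{\alpha}(t+\beta)^{-\alpha}$ gives $\Phi_{X}^{(j)}(t)=(-1)^{j}\beta^{\alpha}\frac{\Gamma(\alpha+j)}{\Gamma(\alpha)}(t+\beta)^{-\alpha-j}$, so
\[\frac{(-t)^{j}}{j!}\Phi_{X}^{(j)}(t)=\binom{\alpha+j-1}{j}\Bigl(\frac{t}{t+\beta}\Bigr)^{j}\Bigl(\frac{\beta}{t+\beta}\Bigr)^{\alpha}=\binom{\alpha+j-1}{j}(1-\rho)^{j}\rho^{\alpha},\]
with $\rho=\beta/(t+\beta)$, which is exactly the $j$-th term of $CDF.NB(k;\alpha,\rho)$. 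Summing over $j=0,\dots,k$ and inserting into Proposition \ref{preval}(a) with $1/\mu=\beta/\alpha$ yields the claimed identity
\[P\Bigl(\discre{L}{t}=\frac{k}{t}\Bigr)=\frac{\beta}{t\alpha}\bigl(1-CDF.NB(k;\alpha,\rho=\tfrac{\beta}{t+\beta})\bigr).\]

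For part (b), I would simply combine part (a) with Proposition \ref{preval}(b). If $F_{X}=\sum_{i=1}^{n}p_{i}F_{i}$ with each $F_{i}$ the distribution of $\Gamma(\alpha_{i},\beta_{i})$, then $\mu_{i}=\alpha_{i}/\beta_{i}$ and $\mu=EX=\sum_{i=1}^{n}p_{i}\alpha_{i}/\beta_{i}$. Proposition \ref{preval}(b) gives $P(\discre{L}{t}=k/t)=\sum_{i=1}^{n}\frac{p_{i}\mu_{i}}{\mu}P(\discre{L_{i}}{t}=k/t)$, and substituting the formula from part (a) for each $P(\discre{L_{i}}{t}=k/t)=\frac{\beta_{i}}{t\alpha_{i}}(1-CDF.NB(k;\alpha_{i},\beta_{i}/(t+\beta_{i})))$, the factor $\frac{p_{i}\mu_{i}}{\mu}\cdot\frac{\beta_{i}}{t\alpha_{i}}=\frac{p_{i}}{t\mu}$ simplifies, producing \rmref{apeqm1}. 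I do not anticipate a serious obstacle here; the only place requiring care is the bookkeeping of the binomial-coefficient identity $\frac{1}{j!}\frac{\Gamma(\alpha+j)}{\Gamma(\alpha)}=\binom{\alpha+j-1}{j}$ (valid for real $\alpha>0$) and making sure the normalizing constant in part (b) is written in the stated form $t(p_{1}\alpha_{1}/\beta_{1}+\dots+p_{n}\alpha_{n}/\beta_{n})$ rather than $t\mu$, which are the same thing.
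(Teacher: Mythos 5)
Your proposal is correct and follows essentially the same route as the paper: differentiate $\Phi_{X}(t)=(\beta/(t+\beta))^{\alpha}$ to identify $\frac{(-t)^{j}}{j!}\Phi_{X}^{(j)}(t)$ with a negative binomial mass, plug into Proposition \ref{preval}(a) with $\mu=\alpha/\beta$ for part (a), and combine with Proposition \ref{preval}(b) and $\mu_{i}=\alpha_{i}/\beta_{i}$ for part (b). No gaps.
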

\begin{proof}
To show (a) note that the Laplace-Stieltjes transform of a $\Gamma(\alpha,\beta)$ random variable is
\[\Phi_{X}(t)=\left(\frac{\beta}{t+\beta}\right)^{\alpha}\]
and therefore,
\[\Phi_{X}^{(j)}(t)=(-1)^{j}\frac{\Gamma(\alpha+j)}{\Gamma(\alpha)}\frac{\beta^{\alpha}}{(t+\beta)^{\alpha+j}}=(-1)^{j}j!{\alpha+j-1\choose j}\frac{\beta^{\alpha}}{(t+\beta)^{\alpha+j}}\]
Recalling that $\mu=\alpha/\beta$ and applying Proposition \ref{preval} (a), we have
\begin{equation}P(\discre{L}{t}=\frac{k}{t})=\frac{\beta}{t\alpha}\left(1-\sum_{j=0}^{k}{\alpha+j-1\choose j}\left(\frac{t}{t+\beta}\right)^j \kern -2pt\left(\frac{\beta }{t+\beta
}\right)^{\alpha}\right),\label{apgaeq}\end{equation}
which shows (a).  Part (b) is immediate by part (a) and Proposition \ref{preval} (b), taking into account that $\mu_{i}=\alpha_{i}/\beta_{i}$.
\end{proof}
\begin{remark} As shown in {\rm \cite{adcaap}}, when $X$ is a gamma random variable $\Gamma(\alpha,\beta)$, the weights of $\discre{X}{t}$, as defined in
\rmref{discre} correspond to the ones of a negative binomial random variable. From Corollary \rtref{corlmg} (a) we deduce that the discretized equilibrium distribution of a gamma $\Gamma(\alpha,\beta)$ random variable is constructed by cumulative sum of the afore-mentioned weights. \end{remark}

\noindent {\bf Example 3.1.  Approximation of ruin probabilities when the claim amounts are mixtures of gamma distributions}
In this example we show some numerical approximations of the ruin probabilites with the method described above, by considering mixtures of gamma claim amounts.
 First of all we describe the steps needed to build the approximation
  \begin{enumerate}[1.]
\item Computation of $P(\discre{L}{t}=\frac{k}{t})$ by the mixture formula given in (\ref{apeqm1}).  For fixed $t$ and fixed values of $\seqn{p}$, $\seqn{\alpha}$ and $\seqn{\beta}$, we need the probability distribution of the corresponding negative binomials.  In our case, we used MATLAB to generate these values, for $k=0,1,2,\dots$

\item Computation of $L_{t}^{*}\bar{\psi}$, using (\ref{discrer}).  Note that this can be done using Panjer's recursion, a popular method for evaluating compound distributions. Panjer's recursion applied to the geometric sum given in (\ref{discrer}) provides the following recursive formula for evaluating the probability mass function of $\displaystyle \sum_{i=1}^{M}\discre{L_{i}}{t}$ (cf. \cite[p. 50]{kagomo})
\[P\left(\sum_{i=1}^{M}\discre{L_{i}}{t}=\frac{k}{t}\right)=\left\{
                                                    \begin{array}{lll}
                                                      \displaystyle \frac{1-\phi}{1-\phi P(\discre{L_{i}}{t}=0)}, & \hbox{if $k=0$;} \\&\\
                                                    \displaystyle  \frac{\displaystyle \phi  \left(\sum_{j=1}^{k}P\left(\discre{L}{t}=\frac{j}{t}\right)P\left(\sum_{i=1}^{M}\discre{L_{i}}{t}=\frac{k-j}{t}
\right)\right)}{1-\phi P(\discre{L}{t}=0)} , & \hbox{if $k=1,2,\dots$.}
                                                    \end{array}
                                                  \right.\]
In our case we used an EXCEL worksheet to generate, for fixed $t$, and the computations given in the previous step, the corresponding values of $L_{t}^{*}\bar{\psi}(k/t)$.
\item The final approximation for the non-ruin probability is
\[M_{t}^{[2]}\bar{\psi}\left(\frac{k}{t}\right)=2P\left(\sum_{i=1}^{M}\discre{L_{i}}{2t}\leq\frac{2k-1}{2t}\right)-P\left(\sum_{i=1}^{M}\discre{L_{i}}{t}\leq \frac{k-1}{t}\right)\]
Note that this implies to repeat steps 1 and 2 for $t$ (second term above) and $2t$ (first term above)
\end{enumerate}
If the shape parameters  $\seqn{\alpha}$ of the initial mixture of claim amounts are chosen to be $\alpha_{i}\geq 1, \ i=1,\dots,n$, Corollary \ref{coruge} ensures us a uniform order of convergence of $1/t^{2}$ .

 We give three numerical computational examples.  For them we use $\phi=0.9$ and $t=5$. Results are shown in Table \ref{table1}
\begin{itemize}
\item First column provides us the approximation of non-ruin probability for exponential claim amounts, having mean 1 that is, $\Gamma(\alpha=1,\beta=1)$.  This example can be used as a test, as exact non-ruin probabilities can be computed in an exact way by the formula (cf. \cite[p.93]{kagomo})
    \[\bar{\psi}(u)=1-(1-p) e^{-p u},\quad u\geq 0,\]
    where $p=1-\phi$, as above.  Note that this function was the used in \cite[Example 2.1] {saunif} for numerical computations (recall Remark \ref{renume}), and provides, with the given parameters, exact values up to four decimal places. With respect to Column 1 in Table 1 the approximation is recalculated using the Panjer's recursion method described above, whereas in \cite{saunif} the explicit expression for $M_{5}^{[2]}\bar{\psi}$ was used.
\item Second column provides us non-ruin probability for Gamma distributed claim amounts $\displaystyle \Gamma(\alpha=\frac{3}{2},1)$. The interest of using the approximation in this case is that, when $\alpha\not \in \Nn$, there is no explicit expression for the ruin probability. However, alternative approximate expressions can be obtained by series expansions (see \cite{willmot}).
\item Third column provides us non-ruin probability for claim amounts being a mixture of the previous cases, with weights $p_{1}=p_{2}=1/2$.  We have chosen the same scale parameter in both terms, to easen comparability, but note that there is no computational problem in choosing different scale parameters.
\end{itemize}
 \begin{table}[h]
  \centering
  \begin{tabular}{|c|l|l|l|l|l|}
    \hline
     $\displaystyle u=\frac{k}{5}$ &Exponential claims&Gamma $\alpha=3/2$ claims&Mixture\\\hline
 1=$\displaystyle \frac{5}{5}$&  $M_{5}^{[2]}\bar{\psi}(1) = 0.1856$ & $M_{5}^{[2]}\bar{\psi}(1) =0.1648$& $M_{5}^{[2]}\bar{\psi}(1) =0.1726$

\\
 5=$\displaystyle \frac{25}{5}$&   $M_{5}^{[2]}\bar{\psi}(5) = 0.4538$& $M_{5}^{[2]}\bar{\psi}(5)=0.3940$&$M_{5}^{[2]}\bar{\psi}(5) =0.4159$

 \\
 10=$\displaystyle \frac{50}{5}$&  $M_{5}^{[2]}\bar{\psi}(10) =   0.6677$& $M_{5}^{[2]}\bar{\psi}(10) = 0.5949$& $M_{5}^{[2]}\bar{\psi}(10) = 0.6225$

\\
  15=$\displaystyle \frac{75}{5}$&   $M_{5}^{[2]}\bar{\psi}(15) =   0.7975$&  $M_{5}^{[2]}\bar{\psi}(15) =0.7248$&$M_{5}^{[2]}\bar{\psi}(15) = 0.7560$

\\
20=$\displaystyle \frac{100}{5}$&  $M_{5}^{[2]}\bar{\psi}(20) =   0.8766$& $M_{5}^{[2]}\bar{\psi}(20)=0.8190$& $M_{5}^{[2]}\bar{\psi}(20) = 0.8423$

\\
  30=$\displaystyle \frac{150}{5}$&  $M_{5}^{[2]}\bar{\psi}(30) =    0.9553$& $M_{5}^{[2]}\bar{\psi}(30) =0.9191$& $M_{5}^{[2]}\bar{\psi}(30) =0.9341$

\\
  40=$\displaystyle \frac{200}{5}$&  $M_{5}^{[2]}\bar{\psi}(40) = 0.9854$&$M_{5}^{[2]}\bar{\psi}(40) =0.9639$&$M_{5}^{[2]}\bar{\psi}(40) =0.9725$
\\
    \hline
  \end{tabular}
  \caption{Approximation of non-ruin probability for different claim amounts} \label{table1}
\end{table}

\section*{Acknowledgments}
 This research has been supported by the research grants MTM2010-15311, E64 (DGA) and by FEDER funds.

\end{document}